\title[Influence and interaction indexes]{Influence and interaction indexes for pseudo-Boolean functions: a unified least squares approach}
\author{Jean-Luc Marichal}
\address{Mathematics Research Unit, FSTC, University of Luxembourg, 6, rue Coudenhove-Kalergi, L-1359 Luxembourg, Luxembourg}
\email{Jean-luc.marichal[at]uni.lu}
\author{Pierre Mathonet}
\address{University of Li\`ege, Department of Mathematics, Grande Traverse, 12 - B37, B-4000 Li\`ege, Belgium}
\email{p.mathonet[at]ulg.ac.be }
\date{March 29, 2014}
\begin{document}

\theoremstyle{plain}
\newtheorem{theorem}{Theorem}[section]
\newtheorem{lemma}[theorem]{Lemma}
\newtheorem{proposition}[theorem]{Proposition}
\newtheorem{corollary}[theorem]{Corollary}
\newtheorem{fact}[theorem]{Fact}
\newtheorem*{main}{Main Theorem}

\theoremstyle{definition}
\newtheorem{definition}[theorem]{Definition}
\newtheorem{example}[theorem]{Example}

\theoremstyle{remark}
\newtheorem*{conjecture}{onjecture}
\newtheorem{remark}{Remark}
\newtheorem{claim}{Claim}

\newcommand{\N}{\mathbb{N}}
\newcommand{\R}{\mathbb{R}}
\newcommand{\I}{\mathbb{I}}
\newcommand{\Vspace}{\vspace{2ex}}
\newcommand{\bfx}{\mathbf{x}}
\newcommand{\bfy}{\mathbf{y}}
\newcommand{\bfh}{\mathbf{h}}
\newcommand{\bfe}{\mathbf{e}}
\newcommand{\p}{\mathbf{p}}
\newcommand{\bfp}{\mathbf{p}}
\newcommand{\Imp}{\mathcal{I}}
\newcommand{\Q}{Q}
\newcommand{\Sy}{{\mathcal{S}}}
\newcommand{\A}{{\mathcal{A}}}

\begin{abstract}
The Banzhaf power and interaction indexes for a pseudo-Boolean function (or a cooperative game) appear
naturally as leading coefficients in the standard least squares approximation of the function by a pseudo-Boolean function of a specified degree. We first observe that this property still holds if we consider approximations by pseudo-Boolean functions depending only on specified variables. We then show that the Banzhaf influence index can also be obtained from the latter approximation problem. Considering certain weighted versions of this
approximation problem, we introduce a class of weighted Banzhaf influence indexes, analyze their most important properties, and point out similarities between the weighted Banzhaf influence index and the corresponding weighted Banzhaf interaction index. We also discuss the issue of reconstructing a pseudo-Boolean function from prescribed influences and point out very different behaviors in the weighted and non-weighted cases.
\end{abstract}

\keywords{Cooperative game; pseudo-Boolean function; power index; influence index; interaction index; least squares approximation.}

\subjclass[2010]{Primary 91A12, 93E24; Secondary 39A70, 41A10.}

\maketitle

\section{Introduction}

Let $f\colon\{0,1\}^n\to\R$ be an $n$-variable pseudo-Boolean function and let $S$ be a subset of its variables. Define the \emph{influence of $S$ over $f$} as the expected value, denoted $I_f(S)$, of the highest variation of $f$ when assigning values independently and uniformly at random to the variables not in $S$ (see \cite{Mar00} for a normalized version of this definition). That is,
$$
I_f(S) ~=~ \frac{1}{2^{n-|S|}}\,\sum_{T\subseteq N\setminus S}\Big(\max_{R\subseteq S}f(T\cup R)-\min_{R\subseteq S}f(T\cup R)\Big)\, ,
$$
where $N=\{1,\ldots,n\}$.\footnote{Throughout we identify Boolean vectors $\bfx\in\{0,1\}^n$ and subsets $T\subseteq N$ by setting $x_i=1$ if and only if $i\in T$. We thus use the same symbol to denote both a pseudo-Boolean function $f\colon\{0,1\}^n\to\R$ and the corresponding set function $f\colon 2^{N}\to\R$ interchangeably.} This notion was first introduced for Boolean functions $f\colon\{0,1\}^n\to\{0,1\}$ by Ben-Or and Linial \cite{BenLin90} (see also \cite{KahKalLin88}). There the influence $I_f(S)$ was (equivalently) defined as the probability that, assigning values independently and uniformly at random to the variables not in $S$, the value of $f$ remains undetermined. Since its introduction, this concept has found many applications in discrete mathematics, cooperative game theory, theoretical computer science, and social choice theory (see, e.g., the survey article \cite{KalSaf06}).

When the function $f$ is nondecreasing in each variable, the formula above reduces to
\begin{equation}\label{eq:sd76dfs}
I_f(S) ~=~ \frac{1}{2^{n-|S|}}\,\sum_{T\subseteq N\setminus S}\big(f(T\cup S)-f(T)\big)\, .
\end{equation}
The latter expression has an interesting interpretation even if $f$ is not nondecreasing. In cooperative game theory for instance, where $f(T)$ represents the worth of coalition $T$ in the game $f$, this expression is precisely the average value of the marginal contributions $f(T\cup S)-f(T)$ of coalition $S$ to outer coalitions $T\subseteq N\setminus S$. Thus, it measures an overall influence (which can be positive or negative) of coalition $S$ in the game $f$. In particular, when $S=\{i\}$ is a singleton it reduces to the Banzhaf power index
$$
I_f(\{i\}) ~=~ \frac{1}{2^{n-1}}\,\sum_{T\subseteq N\setminus\{i\}}\big(f(T\cup\{i\})-f(T)\big)\, .
$$
Thus, the expression in (\ref{eq:sd76dfs}) can be seen as a variant of the original concept of influence that simply extends the Banzhaf power index to coalitions. We call it the \emph{Banzhaf influence index} and denote it by $\Phi_{\mathrm{B}}(f,S)$. Actually, this index was introduced, axiomatized, and even generalized to weighted versions in \cite{MarKojFuj07}.

The \emph{Banzhaf interaction index} \cite{Rou96}, another index which extends the Banzhaf power index to coalitions, is defined for a pseudo-Boolean function $f\colon\{0,1\}^n\to\R$ and a subset $S\subseteq N$ by
\begin{equation}\label{eq:d7df5fd}
I_{\mathrm{B}}(f,S) ~=~ \frac{1}{2^{n-|S|}}\,\sum_{T\subseteq N\setminus S}(\Delta_Sf)(T)\, ,
\end{equation}
where $\Delta_Sf$ denotes the $S$-difference (or discrete $S$-derivative) of $f$.\footnote{The differences of $f$ are defined as
$\Delta_{\varnothing}f=f$, $\Delta_{\{i\}}f(\mathbf{x})=f(\mathbf{x}\mid x_i=1)-f(\mathbf{x}\mid x_i=0)$, and $\Delta_Sf=\Delta_{\{i\}}\Delta_{S\setminus\{i\}}f$ for $i\in S$.} When $|S|\geqslant 2$, this index measures an overall degree of interaction among the variables of $f$ that are in $S$. When $f$ is a game, it measures an overall degree of interaction among the players of coalition $S$ in the game $f$ (see, e.g., \cite{FujKojMar06,GraMarRou00,GraRou99}).

It is known that the Banzhaf power and interaction indexes can be obtained from the solution of a standard least squares approximation problem for pseudo-Boolean functions (see \cite{GraMarRou00,HamHol92}). Weighted versions of this approximation problem recently enabled us to define a class of weighted Banzhaf interaction indexes having several nice properties (see \cite{MarMat11}). However, we observe that there is no such least squares construction for the Banzhaf influence index in the literature.

In this paper we fill this gap in the following way. In Section 2 we first show that the Banzhaf interaction index can be obtained from a different, more natural (but still elementary) least squares approximation problem. Specifically, $I_{\mathrm{B}}(f,S)$ appears as the leading coefficient in the multilinear representation of the best approximation $f_S$ of $f$ by a pseudo-Boolean function that depends only on the variables in $S$. We then prove that the Banzhaf influence index $\Phi_{\mathrm{B}}(f,S)$ can be obtained from the same approximation problem simply by considering the difference $f_S(S)-f_S(\varnothing)$. In Section 3 we introduce a class of weighted Banzhaf influence indexes from the solution of a weighted version of this approximation problem. We show that these indexes define a subclass of the family of \emph{generalized values}, give their most important properties, and point out similarities between the weighted Banzhaf influence index and the corresponding weighted Banzhaf interaction index. In Section 4 we discuss the issue of representing pseudo-Boolean functions in terms of Banzhaf influence indexes. More precisely, we show that in the generic weighted case any pseudo-Boolean function can be reconstructed, up to an additive constant, from prescribed influences. By contrast, in the non-weighted case only half of the information contained in the pseudo-Boolean function can be reconstructed. This important observation fully motivates the investigation of the weighted case, which therefore is not a straightforward extension of the non-weighted case. Finally, in Section 5 we present an application of the weighted Banzhaf influence index in system reliability theory and give a couple of concluding remarks.

\section{Interactions, influences, and least squares approximations}\label{sec:2}

In this section we recall how the Banzhaf interaction index can be obtained from the solution of a standard least squares approximation problem and we show how a variant of this approximation problem can be used to define both the Banzhaf interaction and influence indexes.

It is well known (see, e.g., \cite{HamRud68}) that any pseudo-Boolean function $f\colon\{0,1\}^n\to\R$ can be uniquely represented by a multilinear polynomial function
$$
f ~=~ \sum_{T\subseteq N} a(T)\, u_T\, ,
$$
where $u_T(\bfx)=\prod_{i\in T}x_i$ is the \emph{unanimity game} (or \emph{unanimity function}) for $T\subseteq N$ (with the convention $u_\varnothing=1$) and the set function $a\colon 2^{N}\to\R$, called the \emph{M\"obius transform} of $f$, is defined through the conversion formulas (M\"obius inversion formulas)
\begin{equation}\label{eq:Mob}
a(S) ~=~ \sum_{T\subseteq S} (-1)^{|S|-|T|}\, f(T)\quad\mbox{and}\quad f(S) ~=~ \sum_{T\subseteq S}\, a(T)\, .
\end{equation}

By extending formally any pseudo-Boolean function $f\colon\{0,1\}^n\to\R$ to the unit hypercube $[0,1]^n$ by linear interpolation, Owen~\cite{Owe72,Owe88} introduced the \emph{multilinear extension} of $f$, i.e., the multilinear polynomial $\bar{f}\colon [0,1]^n\to\R$ defined by
$$
\bar{f}(\bfx) ~=~ \sum_{S\subseteq N} a(S)\,\prod_{i\in S}x_i\, ,
$$
where $a$ is the M\"obius transform of $f$.

Denote by $\mathcal{F}^{N}$ the set of pseudo-Boolean functions on $N$ (i.e., with variables in $N$). Recall that the \emph{Banzhaf interaction index} \cite{GraRou99,Rou96} is the mapping $I_{\mathrm{B}}\colon\mathcal{F}^{N}\times 2^{N}\to\R$ defined in Eq.~(\ref{eq:d7df5fd}). Extending the $S$-difference operator $\Delta_S$ to multilinear polynomials on $[0,1]^n$, we can show the following identities (see~\cite{GraMarRou00,Owe88})
$$
I_\mathrm{B}(f,S) ~=~ (\Delta_S\bar{f})\Big(\boldsymbol{\frac{1}{2}}\Big) ~=~ \int_{[0,1]^n}\Delta_S\bar{f}(\bfx)\, d\bfx\, ,
$$
where $\boldsymbol{\frac 12}$ stands for $\big(\frac 12,\ldots,\frac 12\big)$. Since the $S$-difference operator has the same effect as the $S$-derivative operator $D_S$ (i.e., the partial derivative operator
with respect to the variables in $S$) when applied to multilinear polynomials on $[0,1]^n$, we also have
\begin{equation}\label{eq:we7r89a}
I_\mathrm{B}(f,S) ~=~ (D_S\bar{f})\Big(\boldsymbol{\frac 12}\Big) ~=~ \int_{[0,1]^n}D_S\bar{f}(\bfx)\, d\bfx\, .
\end{equation}

We now recall how the index $I_{\mathrm{B}}$ can be obtained from an approximation problem. For $k\in\{0,\ldots,n\}$ define
$$
V_k ~=~ \mathrm{span}\{u_T:T\subseteq N,\, |T|\leqslant k\}\, ,
$$
that is, $V_k$ is the linear subspace of all multilinear polynomials $g\colon\{0,1\}^n\to\R$ of degree at most $k$, i.e., of the form
$$
g ~=~ \sum_{\textstyle{T\subseteq N\atop |T|\leqslant k}} c(T)\,u_T\, ,\qquad c(T)\in\R\, .
$$
The best $k$th approximation of a function $f\colon\{0,1\}^n\to\R$ is the function $f_k\in V_k$ that minimizes the squared distance
\begin{equation}\label{eq:NonWeiDist}
\sum_{\bfx\in\{0,1\}^n}\big(f(\bfx)-g(\bfx)\big)^2 ~=~ \sum_{T\subseteq N}\big(f(T)-g(T)\big)^2
\end{equation}
among all functions $g\in V_k$.

The following proposition, which was proved in \cite{GraMarRou00} (see \cite{HamHol92} for an earlier work), expresses the number $I_{\mathrm{B}}(f,S)$ in terms of the best $|S|$th approximation $f_{|S|}$ of $f$.

\begin{proposition}[{\cite{GraMarRou00}}]\label{prop:interproj6}
For every $f\colon\{0,1\}^n\to\R$ and every $S\subseteq N$, the number $I_{\mathrm{B}}(f,S)$ is the coefficient of $u_S$ in the multilinear representation of the best $|S|$th approximation $f_{|S|}$ of $f$.
\end{proposition}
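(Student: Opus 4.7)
The plan is to exploit an orthogonal basis of $\mathcal{F}^{N}$ that is compatible with the filtration $V_0\subset V_1\subset\cdots\subset V_n$, so that the best $k$th approximation appears as a coefficient-wise truncation. First I would introduce the Walsh-type system $\chi_T(\bfx)=\prod_{i\in T}(2x_i-1)$ for $T\subseteq N$ and verify, using $(2x-1)^2=1$ together with $\sum_{x\in\{0,1\}}(2x-1)=0$, that it is orthogonal for the inner product $\langle f,g\rangle=\sum_{\bfx\in\{0,1\}^n}f(\bfx)g(\bfx)$ underlying (\ref{eq:NonWeiDist}), with $\langle\chi_T,\chi_{T'}\rangle=2^n\,\delta_{T,T'}$. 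Since $\chi_T$ is a multilinear polynomial of degree exactly $|T|$, this gives $V_k=\mathrm{span}\{\chi_T:|T|\leqslant k\}$, and the projection theorem yields
$$f_{|S|} ~=~ \sum_{|T|\leqslant |S|}\frac{\langle f,\chi_T\rangle}{2^n}\,\chi_T.$$

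Next I would extract the coefficient of $u_S$ from this sum. Expanding $\chi_T=\sum_{R\subseteq T}(-1)^{|T|-|R|}\,2^{|R|}\,u_R$ shows that $u_S$ occurs in $\chi_T$ only when $T\supseteq S$; combined with $|T|\leqslant |S|$ this forces $T=S$, so the $u_S$-coefficient of $f_{|S|}$ equals $2^{|S|}\langle f,\chi_S\rangle/2^n$.

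It then remains to identify this quantity with $I_{\mathrm{B}}(f,S)$. Substituting $f=\sum_T a(T)\,u_T$ and evaluating $\langle u_T,\chi_S\rangle$ by fixing the coordinates indexed by $T$ to $1$ and using $\sum_{x\in\{0,1\}}(2x-1)=0$ on the remaining factors indexed by $S\setminus T$, I expect to obtain $\langle u_T,\chi_S\rangle=2^{n-|T|}$ when $T\supseteq S$ and $0$ otherwise. The $u_S$-coefficient of $f_{|S|}$ will therefore reduce to $\sum_{T\supseteq S}a(T)/2^{|T\setminus S|}$, which coincides with $I_{\mathrm{B}}(f,S)$ because evaluating $D_S\bar f(\bfx)=\sum_{T\supseteq S}a(T)\prod_{i\in T\setminus S}x_i$ at $\boldsymbol{\frac 12}$ produces the same expression, in view of (\ref{eq:we7r89a}). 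The main difficulty I anticipate lies precisely in this last step: both the projection formula and the definition of $I_{\mathrm{B}}$ must be transported to the common language of the M\"obius coefficients $a(T)$ before the identity becomes transparent; everything preceding it is standard linear algebra on a well-chosen orthogonal basis.
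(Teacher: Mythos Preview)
Your argument is correct. Note, however, that the paper does not supply its own proof of this proposition: it is quoted from \cite{GraMarRou00} and used as a known input. There is therefore no ``paper's proof'' to compare against in the strict sense.

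That said, your approach dovetails exactly with the machinery the paper sets up around Eqs.~(\ref{scalprod})--(\ref{projAS}). Your $\chi_T$ is the paper's $v_T$ (up to the $2^{-n}$ normalization of the inner product), and the projection formula you write is the first identity in (\ref{projAS}). Your observation that $u_S$ appears in $\chi_T$ only for $T\supseteq S$, forcing $T=S$ under the constraint $|T|\leqslant |S|$, leads to the coefficient $2^{|S|}\langle f,\chi_S\rangle/2^n$, which in the paper's normalized inner product is precisely $2^{|S|}\langle f,v_S\rangle$. This is Eq.~(\ref{scalprodinter}); the paper \emph{derives} (\ref{scalprodinter}) by assuming Proposition~\ref{prop:interproj6} and reading off the leading coefficient from (\ref{projAS}), whereas you run the logic in the opposite direction and thereby supply a self-contained proof of the proposition itself. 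The final identification via $D_S\bar f(\boldsymbol{\tfrac12})=\sum_{T\supseteq S}a(T)\,2^{-|T\setminus S|}$ and (\ref{eq:we7r89a}) is correct and is the natural way to close the loop once everything is expressed through the M\"obius coefficients.
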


An alternative (and perhaps more natural) approach to measure the influence on $f$ of its $i$th variable consists in considering the coefficient of $u_{\{i\}}$ in the best
approximation of $f$ by a function of the form
$$
g=c(\varnothing)\, u_{\varnothing}+ c(\{i\})\, u_{\{i\}}
$$
(instead of a function in $V_1$), as classically done for linear models in statistics. More generally, for every $S\subseteq N$ define $V_S=\{u_T:T\subseteq S\}$, that is, $V_S$ is the linear subspace of all multilinear polynomials $g\colon\{0,1\}^n\to\R$ that depend only on the variables in $S$, i.e., of the form
$$
g ~=~ \sum_{T\subseteq S} c(T)\,u_T\, ,\qquad c(T)\in\R\, .
$$
The \emph{best $S$-approximation} of a function $f\colon\{0,1\}^n\to\R$ is then the function $f_S\in V_S$ that minimizes the squared distance (\ref{eq:NonWeiDist})
among all functions $g\in V_S$.

We now show that $I_{\mathrm{B}}(f,S)$ is also the coefficient of $u_S$ in the multilinear representation of $f_S$. On the one hand, $f_S$ is the orthogonal projection of $f$ onto $V_S$ with respect to the inner product
\begin{equation}\label{scalprod}
\langle f,g\rangle ~=~ \frac{1}{2^n}\sum_{T\subseteq N} f(T)\, g(T)\, .\,\footnote{Note that the multiplicative normalization of the inner product does not
change the projection problem.}
\end{equation}
On the other hand, it is well known and easy to prove that the $2^n$ functions
$$
v_T(\bfx)=\prod_{i\in T}(2x_i-1),\qquad T\subseteq N,
$$
form an orthonormal set with respect to this inner product. Thus, the
best $k$th- and $S$-approximations of $f$ are respectively given by
\begin{equation}\label{projAS}
f_k ~=~ \sum_{\textstyle{T\subseteq N\atop |T|\leqslant k}}\langle f, v_T\rangle\, v_T\quad\mbox{and}\quad f_S ~=~ \sum_{T\subseteq S}\langle f,
v_T\rangle\, v_T\, .
\end{equation}

These formulas enable us to prove the following simple but important result, which expresses the number $I_{\mathrm{B}}(f,S)$ in terms of the best $S$-approximation $f_S$ of $f$.

\begin{proposition}\label{prop:interproj}
For every $f\colon\{0,1\}^n\to\R$ and every $S\subseteq N$, the number $I_{\mathrm{B}}(f,S)$ is the coefficient of $u_S$ (i.e., the leading coefficient) in the multilinear representation of the best $S$-approximation $f_S$ of $f$.
\end{proposition}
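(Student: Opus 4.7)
The plan is to use the projection formula $f_S = \sum_{T\subseteq S}\langle f, v_T\rangle\, v_T$ from~(\ref{projAS}), expand each basis function $v_T$ in the unanimity basis, and then read off the coefficient of $u_S$ in the resulting expression.

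First I would expand $v_T(\bfx) = \prod_{i\in T}(2x_i-1) = \sum_{R\subseteq T} 2^{|R|}(-1)^{|T|-|R|}\, u_R$, so that the coefficient of $u_S$ in $v_T$ is zero unless $S\subseteq T$, in which case it equals $2^{|S|}(-1)^{|T|-|S|}$. Since the orthogonal expansion of $f_S$ runs only over subsets $T\subseteq S$, the unique index contributing a nonzero $u_S$-term is $T=S$ itself, yielding the coefficient $2^{|S|}\langle f, v_S\rangle$.

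It then remains to verify that $2^{|S|}\langle f, v_S\rangle = I_{\mathrm{B}}(f,S)$. Using~(\ref{scalprod}), this reduces to
$$
\frac{1}{2^{n-|S|}}\sum_{U\subseteq N} f(U)\, v_S(U) \;=\; I_{\mathrm{B}}(f,S).
$$
The key step is the identity $v_S(U) = (-1)^{|S|-|U\cap S|}$, checked factor by factor: for each $i\in S$, the factor $2\mathbf{1}_{i\in U}-1$ equals $+1$ if $i\in U$ and $-1$ otherwise. Splitting each $U\subseteq N$ uniquely as $U=T\cup R$ with $T=U\setminus S$ and $R=U\cap S$, the sum rearranges as $\sum_{T\subseteq N\setminus S}\sum_{R\subseteq S}(-1)^{|S|-|R|}f(T\cup R) = \sum_{T\subseteq N\setminus S}(\Delta_S f)(T)$, which matches~(\ref{eq:d7df5fd}) after dividing by $2^{n-|S|}$. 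The whole argument is routine combinatorial bookkeeping once one notices that the triangular shape of $v_T$ in the unanimity basis, combined with the restriction $T\subseteq S$, forces $T=S$; there is no genuine obstacle.
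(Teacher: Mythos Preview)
Your proof is correct. Both your argument and the paper's hinge on the identity $I_{\mathrm{B}}(f,S)=2^{|S|}\langle f,v_S\rangle$ together with the observation that in the expansion $f_S=\sum_{T\subseteq S}\langle f,v_T\rangle\,v_T$ only the term $T=S$ contributes a $u_S$-coefficient. The difference lies in how that identity is obtained: the paper invokes Proposition~\ref{prop:interproj6} (the cited result from \cite{GraMarRou00} on the $k$th approximation $f_{|S|}$) to read off $I_{\mathrm{B}}(f,S)=2^{|S|}\langle f,v_S\rangle$ immediately, whereas you verify it from scratch by computing $v_S(U)=(-1)^{|S|-|U\cap S|}$ and recognizing the resulting double sum as $\sum_{T\subseteq N\setminus S}(\Delta_Sf)(T)$. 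Your route is slightly longer but self-contained, avoiding any dependence on the earlier approximation result; the paper's route is shorter precisely because it leverages that prior proposition.
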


\begin{proof}
Since $I_{\mathrm{B}}(f,S)$ is the coefficient of $u_S$ in the multilinear representation of $f_{|S|}$, from the first equality in (\ref{projAS}) we obtain
\begin{equation}\label{scalprodinter}
I_{\mathrm{B}}(f,S) ~=~ 2^{|S|}\, \langle f,v_S\rangle\, .
\end{equation}
We then conclude by the second equality in (\ref{projAS}).
\end{proof}

Thus, combining Proposition~\ref{prop:interproj} with Eq.~(\ref{eq:Mob}), we immediately see that the number $I_{\mathrm{B}}(f,S)$ can be expressed in terms of the approximation $f_S$ as
$$
I_{\mathrm{B}}(f,S) ~=~ \sum_{T\subseteq S}(-1)^{|S|-|T|}\, f_S(T)\, .
$$

Recall that the \emph{Banzhaf influence index} \cite{MarKojFuj07} is the mapping $\Phi_{\mathrm{B}}\colon\mathcal{F}^{N}\times 2^{N}\to\R$ defined by
\begin{equation}\label{Influence1}
\Phi_{\mathrm{B}}(f,S) ~=~ \frac{1}{2^{n-|S|}}\sum_{T\subseteq N\setminus S}\big(f(T\cup S)-f(T)\big)\, .
\end{equation}

Since the map $f\mapsto \Phi_{\mathrm{B}}(f,S)$ is linear for every $S\subseteq N$, it can be expressed by means of the inner product (\ref{scalprod}). To this aim, consider the function $g_{S}\colon\{0,1\}^n\to\R$ defined by
\begin{equation}\label{hsgs}
g_S(\bfx) ~=~ 2^{|S|}\, \bigg(\prod_{i\in S}x_i-\prod_{i\in S}(1-x_i)\bigg)\, .
\end{equation}

\begin{proposition}\label{prop:pscalIm}
For every $f\colon\{0,1\}^n\to\R$ and every $S\subseteq N$, we have $\Phi_{\mathrm{B}}(f,S)=\langle f,g_S\rangle$.
\end{proposition}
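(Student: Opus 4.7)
The plan is to unfold both sides of the claimed identity and check they agree by a direct evaluation, using the subset/Boolean vector identification from the paper's footnote. The key observation is that the function $g_S$ of Eq.~(\ref{hsgs}) takes only three values when restricted to $\{0,1\}^n$: identifying $\bfx$ with $T\subseteq N$, the product $\prod_{i\in S}x_i$ equals $1$ exactly when $S\subseteq T$ and $0$ otherwise, while $\prod_{i\in S}(1-x_i)$ equals $1$ exactly when $T\cap S=\varnothing$ and $0$ otherwise. Hence $g_S(T)=2^{|S|}$ if $S\subseteq T$, $g_S(T)=-2^{|S|}$ if $T\cap S=\varnothing$, and $g_S(T)=0$ for every other $T$.

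Using this, I would plug into the inner product (\ref{scalprod}) and split the sum over $T\subseteq N$ into the two relevant pieces, obtaining
$$
\langle f,g_S\rangle ~=~ \frac{2^{|S|}}{2^n}\Bigg(\sum_{T\supseteq S}f(T)-\sum_{T\cap S=\varnothing}f(T)\Bigg).
$$
The first sum is reindexed by writing $T=R\cup S$ with $R\subseteq N\setminus S$, and the second sum is already indexed by subsets $R\subseteq N\setminus S$. This gives
$$
\langle f,g_S\rangle ~=~ \frac{1}{2^{n-|S|}}\sum_{R\subseteq N\setminus S}\bigl(f(R\cup S)-f(R)\bigr),
$$
which is exactly $\Phi_{\mathrm{B}}(f,S)$ by definition (\ref{Influence1}).

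There is essentially no obstacle: the proof is a one-line evaluation of $g_S$ on $\{0,1\}^n$ followed by reindexing. The only thing worth emphasizing in the write-up is the reason for the specific form of $g_S$, namely that the two product terms in (\ref{hsgs}) act as indicators of the events $\{T\supseteq S\}$ and $\{T\cap S=\varnothing\}$ respectively, so that $g_S$ is (up to the normalization $2^{|S|}$) precisely the signed indicator needed to reproduce the marginal contribution $f(T\cup S)-f(T)$ after averaging.
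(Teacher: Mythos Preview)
Your argument is correct and is essentially identical to the paper's own proof: both compute $\langle f,g_S\rangle$ directly from (\ref{scalprod}) by observing that $g_S$ is $2^{|S|}$ times the indicator of $\{T\supseteq S\}$ minus the indicator of $\{T\subseteq N\setminus S\}$, and then match the result with (\ref{Influence1}). The paper simply states the resulting expression in one line, whereas you spell out the three-value observation and the reindexing explicitly.
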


\begin{proof}
Using (\ref{scalprod}), we obtain
\[
\langle f,g_S\rangle ~=~ \frac{1}{2^{n-|S|}}\Big(\sum_{T\supseteq S} f(T)-\sum_{T\subseteq N\setminus S} f(T)\Big)\, ,
\]
which is precisely the right-hand side of (\ref{Influence1}).
\end{proof}

From Proposition~\ref{prop:pscalIm} we can easily derive an explicit expression for $\Phi_{\mathrm{B}}(f,S)$ in terms of the Banzhaf interaction index $I_{\mathrm{B}}$. This
expression was already found in \cite{Mar00}. We first consider a lemma.

\begin{lemma}\label{lemma:d7f6}
For every $S\subseteq N$, we have $g_S=2\,\sum_{T\subseteq S,\, |T|\,\mathrm{odd}}v_T$.
\end{lemma}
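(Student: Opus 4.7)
The plan is to prove this identity by a direct multiplicative expansion in terms of the building blocks $2x_i-1$. The key observation is that $2x_i$ and $2(1-x_i)$ both admit a clean decomposition in terms of these building blocks, namely $2x_i=1+(2x_i-1)$ and $2(1-x_i)=1-(2x_i-1)$, so that multiplying $g_S$ through by $1 = 2^{|S|}/2^{|S|}$ will let me rewrite the two products appearing in \eqref{hsgs} as sums of $v_T$'s.

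Concretely, first I would multiply inside the parentheses of \eqref{hsgs}, writing
\[
2^{|S|}\prod_{i\in S}x_i ~=~ \prod_{i\in S}(2x_i) ~=~ \prod_{i\in S}\bigl(1+(2x_i-1)\bigr),
\]
and similarly
\[
2^{|S|}\prod_{i\in S}(1-x_i) ~=~ \prod_{i\in S}\bigl(2(1-x_i)\bigr) ~=~ \prod_{i\in S}\bigl(1-(2x_i-1)\bigr).
\]
Second, I would expand each of these products via the standard identity $\prod_{i\in S}(1+y_i)=\sum_{T\subseteq S}\prod_{i\in T}y_i$, obtaining respectively $\sum_{T\subseteq S}v_T$ and $\sum_{T\subseteq S}(-1)^{|T|}v_T$. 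Third, subtracting and collecting gives
\[
g_S ~=~ \sum_{T\subseteq S}\bigl(1-(-1)^{|T|}\bigr)\,v_T ~=~ 2\sum_{\substack{T\subseteq S\\ |T|\text{ odd}}}v_T,
\]
since the coefficient $1-(-1)^{|T|}$ vanishes for even $|T|$ and equals $2$ for odd $|T|$.

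There is no real obstacle here: the proof is a short algebraic manipulation, and the only thing to notice is the cancellation between $2x_i$ and $2(1-x_i)$ when re-expressed around the midpoint $x_i=\tfrac12$. One could alternatively obtain the same result by computing $\langle g_S,v_T\rangle$ for each $T\subseteq N$ using the orthonormality of the family $\{v_T\}_{T\subseteq N}$, but the direct expansion above is more economical.
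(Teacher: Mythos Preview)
Your proof is correct but follows a genuinely different route from the paper's. The paper proceeds via orthonormality: since $\{v_T:T\subseteq N\}$ is an orthonormal basis, one writes $g_S=\sum_{T\subseteq N}\langle g_S,v_T\rangle\, v_T$ and then computes each coefficient through the chain $\langle g_S,v_T\rangle=2^{-|T|}\,I_{\mathrm{B}}(g_S,T)=2^{-|T|}\,(D_T\bar g_S)(\boldsymbol{\tfrac12})$, reducing the lemma to evaluating the partial derivatives of $\bar g_S$ at the center of the cube. Your argument bypasses all of this: the substitutions $2x_i=1+(2x_i-1)$ and $2(1-x_i)=1-(2x_i-1)$ turn $g_S$ directly into a difference of two products that expand into $\sum_{T\subseteq S}v_T$ and $\sum_{T\subseteq S}(-1)^{|T|}v_T$, and the odd-$|T|$ restriction drops out by cancellation. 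The paper's approach has the merit of showcasing how the inner-product machinery (in particular (\ref{scalprodinter}) and (\ref{eq:we7r89a})) ties the pieces together, which is thematically consistent with the rest of the section; your approach is shorter, entirely elementary, and needs nothing beyond the binomial product identity---indeed, it is exactly the alternative you allude to in your final sentence, done in reverse.
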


\begin{proof}
Since the functions $v_T$ $(T\subseteq N)$ form an orthonormal basis for $\mathcal{F}^{N}$, we have $g_S= \sum_{T\subseteq N}\langle g_S,
v_T\rangle\, v_T$. Using (\ref{scalprodinter}), (\ref{hsgs}), and then (\ref{eq:we7r89a}), we obtain
\[
\langle g_S, v_T\rangle ~=~ 2^{-|T|}\, I_{\mathrm{B}}(g_S,T) ~=~ 2^{-|T|}\, (D_T\bar g_S)\Big(\boldsymbol{\frac{1}{2}}\Big)\, .
\]
The result then follows directly from the computation of the derivative $D_T\bar g_S$.
\end{proof}

\begin{proposition}[{\cite[Proposition~4.1]{Mar00}}]\label{propinfinter}
For every $f\colon\{0,1\}^n\to\R$ and every $S\subseteq N$, we have
$$
\Phi_{\mathrm{B}}(f,S) ~=~ \sum_{\textstyle{T\subseteq S\atop |T|\,\mathrm{odd}}}\Big(\frac 12\Big)^{|T|-1}\, I_{\mathrm{B}}(f,T)\, .
$$
\end{proposition}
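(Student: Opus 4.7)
The plan is to combine the three tools already developed in this section: Proposition~\ref{prop:pscalIm} which expresses $\Phi_{\mathrm{B}}(f,S)$ as an inner product with $g_S$, Lemma~\ref{lemma:d7f6} which expands $g_S$ in the orthonormal basis $\{v_T\}$, and identity~(\ref{scalprodinter}) which converts inner products against $v_T$ into values of the Banzhaf interaction index.

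More precisely, I would first apply Proposition~\ref{prop:pscalIm} to write $\Phi_{\mathrm{B}}(f,S) = \langle f, g_S \rangle$. Then, substituting the expansion given by Lemma~\ref{lemma:d7f6} and using bilinearity of $\langle \cdot, \cdot \rangle$, I obtain
$$
\Phi_{\mathrm{B}}(f,S) ~=~ 2 \sum_{\textstyle{T\subseteq S\atop |T|\,\mathrm{odd}}} \langle f, v_T \rangle.
$$
Finally, identity~(\ref{scalprodinter}) gives $\langle f, v_T \rangle = 2^{-|T|}\, I_{\mathrm{B}}(f,T)$, and substituting this into the previous display yields the stated formula after collecting the powers of $2$.

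Since every ingredient is already proved and the argument is essentially a three-line substitution, there is no real obstacle; the only care needed is to make sure the odd-cardinality restriction from Lemma~\ref{lemma:d7f6} is preserved throughout and that the exponents combine correctly to $(1/2)^{|T|-1}$.
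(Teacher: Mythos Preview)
Your proposal is correct and matches the paper's own proof essentially line for line: the paper also invokes Proposition~\ref{prop:pscalIm} and Lemma~\ref{lemma:d7f6} to obtain $\Phi_{\mathrm{B}}(f,S)=2\sum_{T\subseteq S,\,|T|\text{ odd}}\langle f,v_T\rangle$, and then concludes by~(\ref{scalprodinter}).
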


\begin{proof}
By Proposition~\ref{prop:pscalIm} and Lemma~\ref{lemma:d7f6}, we obtain
\begin{equation}\label{sdf6fs6sd}
\Phi_{\mathrm{B}}(f,S) ~=~ \langle f,g_S\rangle ~=~ 2\sum_{\textstyle{T\subseteq S\atop |T|\,\mathrm{odd}}}\langle f,v_T\rangle\, .
\end{equation}
We then conclude by (\ref{scalprodinter}).
\end{proof}

The following proposition gives an expression for  $\Phi_{\mathrm{B}}(f,S)$ in terms of the best $S$-approximation $f_S$ of $f$. This proposition together with Proposition~\ref{prop:interproj} show that the indexes $I_{\mathrm{B}}(f,S)$ and $\Phi_{\mathrm{B}}(f,S)$ are actually two facets of the same construction, namely the best $S$-approximation of $f$.

\begin{proposition}\label{prop:influenceapprox}
For every $f\colon\{0,1\}^n\to\R$ and every $S\subseteq N$, we have $$\Phi_{\mathrm{B}}(f,S)~=~f_S(S)-f_S(\varnothing).$$
\end{proposition}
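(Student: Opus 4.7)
The plan is to evaluate $f_S$ at the two Boolean points $S$ and $\varnothing$ using the orthonormal expansion $f_S=\sum_{T\subseteq S}\langle f,v_T\rangle\, v_T$ from the second formula in (\ref{projAS}), and then to recognize the resulting expression as $\langle f,g_S\rangle$ via Lemma~\ref{lemma:d7f6} and Proposition~\ref{prop:pscalIm}.

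First I would compute the values $v_T(S)$ and $v_T(\varnothing)$ for $T\subseteq S$. Using the identification of subsets with $0/1$-vectors, $v_T(\bfx)=\prod_{i\in T}(2x_i-1)$ gives $v_T(S)=\prod_{i\in T}(2\cdot 1-1)=1$ (since $T\subseteq S$ means every coordinate indexed by $T$ equals $1$), and $v_T(\varnothing)=\prod_{i\in T}(2\cdot 0-1)=(-1)^{|T|}$. Hence
\[
v_T(S)-v_T(\varnothing) ~=~ 1-(-1)^{|T|} ~=~ \begin{cases} 2, & |T|\text{ odd},\\ 0, & |T|\text{ even}.\end{cases}
\]

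Substituting this into the expansion of $f_S$ given by (\ref{projAS}) yields
\[
f_S(S)-f_S(\varnothing) ~=~ \sum_{T\subseteq S}\langle f,v_T\rangle\,\big(v_T(S)-v_T(\varnothing)\big) ~=~ 2\sum_{\textstyle{T\subseteq S\atop |T|\,\mathrm{odd}}}\langle f,v_T\rangle\, .
\]
By Lemma~\ref{lemma:d7f6} the right-hand side equals $\langle f,g_S\rangle$, and by Proposition~\ref{prop:pscalIm} this in turn equals $\Phi_{\mathrm{B}}(f,S)$. (Equivalently, the right-hand side is already identified with $\Phi_{\mathrm{B}}(f,S)$ in equation (\ref{sdf6fs6sd}) from the proof of Proposition~\ref{propinfinter}.)

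There is no real obstacle here: the result is essentially a bookkeeping identity once one notes that the basis functions $v_T$ take only the values $\pm 1$ at Boolean points and that the two evaluation points $S$ and $\varnothing$ are precisely the ones that select the odd-cardinality terms in the expansion of $f_S$. The only point that deserves care is the convention identifying a subset with its indicator vector, which must be used consistently when evaluating $v_T$ at $S$ and $\varnothing$.
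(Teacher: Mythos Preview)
Your proof is correct and follows essentially the same approach as the paper: both expand $f_S$ via (\ref{projAS}), compute $v_T(S)-v_T(\varnothing)=1-(-1)^{|T|}$, and then invoke (\ref{sdf6fs6sd}) (which is exactly the combination of Lemma~\ref{lemma:d7f6} and Proposition~\ref{prop:pscalIm} you cite) to identify the result with $\Phi_{\mathrm{B}}(f,S)$. Your version is simply slightly more explicit about the evaluation of $v_T$ at the two Boolean points.
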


\begin{proof}
By (\ref{projAS}), we have
\[
f_S(S)-f_S(\varnothing) ~=~ \sum_{T\subseteq S}\langle f, v_T\rangle \big(v_T(S)-v_T(\varnothing)\big) ~=~ \sum_{T\subseteq S}\langle f, v_T\rangle\big(1-(-1)^{|T|}\big)\, .
\]
Using (\ref{sdf6fs6sd}), we see that the latter expression is precisely $\Phi_{\mathrm{B}}(f,S)$.
\end{proof}

Proposition~\ref{prop:influenceapprox} is actually one of the key results of this paper. Indeed, as we will now see, it will enable us to define weighted Banzhaf influence indexes from a weighted version of the approximation problem in complete analogy with the way the weighted Banzhaf interaction index was defined in \cite{MarMat11}.

\section{Weighted influences defined by least squares}

In \cite{MarMat11} we investigated weighted versions of the best $k$th approximation problem for pseudo-Boolean functions (e.g., to allow nonuniform assignments of the variables). This study enabled us to define a class of weighted Banzhaf interaction indexes. In the present section we show that the corresponding weighted version of the best $S$-approximation problem described in Section~\ref{sec:2} not only yields the same weighted Banzhaf interaction index but also provides a natural definition of a weighted Banzhaf influence index.

Given a weight function $w\colon\{0,1\}^n\to\left]0,\infty\right[$ and a pseudo-Boolean function $f\colon\{0,1\}^n\to\R$, we define the \emph{best $S$-approximation of $f$} as the unique multilinear polynomial in $V_S$ that minimizes the squared distance
\begin{equation}\label{eq:WeiDist}
\sum_{\bfx\in\{0,1\}^n}w(\bfx)\big(f(\bfx)-g(\bfx)\big)^2 ~=~ \sum_{T\subseteq N}w(T)\big(f(T)-g(T)\big)^2
\end{equation}
among all functions $g\in V_S$.

Assuming without loss of generality that $\sum_{T\subseteq N}w(T)=1$, we see that $w$ defines a probability distribution over $2^{N}$. Considering the game theory context, we can interpret $w(T)$ as the probability that coalition $T$ forms, that is, $w(T)=\Pr(C=T)$, where $C$ represents a random coalition.

We also assume that the variables are set independently of each other. In game theory, this means that the players behave independently of each other to form coalitions, i.e., the events $(C\ni i)$ ($i\in N$) are independent.\footnote{In Section~\ref{sec:cr5} we give a justification for this independence assumption.} Setting $p_i=\Pr(C\ni i)=\sum_{S\ni i}w(S)$, we then have
\begin{equation}\label{eq:indep}
w(S) ~=~ \prod_{i\in S}p_i\,\prod_{i\in N\setminus S}(1-p_i)\, ,
\end{equation}
which implies $0<p_i<1$. Thus, the probability distribution $w$ is completely determined by the $n$-tuple $\bfp=(p_1,\ldots,p_n)\in\left]0,1\right[^n$.

We now provide an explicit expression for the best $S$-approximation of a pseudo-Boolean function. On the one hand, the squared distance (\ref{eq:WeiDist}) is induced by the weighted Euclidean inner product
$$
\langle f,g\rangle ~=~ \sum_{\bfx\in\{0,1\}^n}w(\bfx)\, f(\bfx)\,g(\bfx)\, .
$$
On the other hand, as observed in \cite{DinLaxCheChe10} the functions $v_{T,\bfp}\colon\{0,1\}^n\to\R$ ($T\subseteq N$) defined by
\begin{equation}\label{eq:asfkS2}
v_{T,\bfp}(\bfx) ~=~ \prod_{i\in T}\frac{x_i-p_i}{\sqrt{p_i(1-p_i)}}
\end{equation}
are pairwise orthogonal and normalized. This provides the following immediate solution to the weighted approximation problem.

\begin{proposition}\label{thm:Coeffk}
The best $S$-approximation of $f\colon\{0,1\}^n\to\R$ is given by
\begin{equation}\label{eq:akS2}
f_{S,\p} ~=~ \sum_{T\subseteq S}\langle f,v_{T,\bfp}\rangle \, v_{T,\bfp}\, .
\end{equation}
\end{proposition}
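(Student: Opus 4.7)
The plan is to invoke the standard Hilbert space fact that the best approximation of a vector by an element of a finite-dimensional subspace is the orthogonal projection onto that subspace, and that this projection is expressed, relative to any orthonormal basis, by the usual Fourier expansion. The weighted squared distance (\ref{eq:WeiDist}) is exactly $\|f-g\|^2$ for the weighted inner product $\langle f,g\rangle=\sum_{\bfx\in\{0,1\}^n}w(\bfx)f(\bfx)g(\bfx)$. So it suffices to show that $\{v_{T,\bfp}:T\subseteq S\}$ is an orthonormal basis of $V_S$ with respect to this inner product.

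The first step is to verify that the full family $\{v_{T,\bfp}\}_{T\subseteq N}$ is orthonormal. Using the product form (\ref{eq:indep}) of $w$, the sum defining $\langle v_{T,\bfp},v_{T',\bfp}\rangle$ factorizes as a product over $i\in N$ of one-variable expectations of the form $\sum_{x_i\in\{0,1\}}p_i^{x_i}(1-p_i)^{1-x_i}\,\varphi_i(x_i)$, where $\varphi_i\equiv 1$ if $i\notin T\cup T'$, $\varphi_i(x_i)=(x_i-p_i)^2/(p_i(1-p_i))$ if $i\in T\cap T'$, and $\varphi_i(x_i)=(x_i-p_i)/\sqrt{p_i(1-p_i)}$ if $i$ lies in the symmetric difference of $T$ and $T'$. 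A direct two-term computation shows that each such factor equals $1$ in the first two cases and $0$ in the third, so the product vanishes unless $T=T'$, in which case it equals $1$.

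The second step is to note that, for every $T\subseteq S$, the polynomial $v_{T,\bfp}$ involves only the variables indexed by elements of $T\subseteq S$ and hence lies in $V_S$. Since $\dim V_S=2^{|S|}$ coincides with the cardinality of $\{T\subseteq S\}$, and since the functions $\{v_{T,\bfp}\}_{T\subseteq S}$ are linearly independent by the orthonormality established in the previous step, they form an orthonormal basis of $V_S$. The best $S$-approximation $f_{S,\p}$, being by definition the element of $V_S$ minimizing $\|f-g\|^2$, is therefore the orthogonal projection of $f$ onto $V_S$, and expanding this projection in the basis $\{v_{T,\bfp}\}_{T\subseteq S}$ yields exactly the formula (\ref{eq:akS2}).

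There is no real obstacle: once the product structure (\ref{eq:indep}) of $w$ is exploited to factorize the inner product, the argument is pure finite-dimensional linear algebra. The only slightly delicate point is the elementary factor-by-factor verification in the first step, and even this is essentially dictated by the independence assumption.
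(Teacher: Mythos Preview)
Your proof is correct and follows essentially the same approach as the paper: the paper states (citing \cite{DinLaxCheChe10}) that the functions $v_{T,\bfp}$ are orthonormal with respect to the weighted inner product and then declares the projection formula immediate, whereas you supply the explicit factor-by-factor verification of orthonormality and spell out the dimension count showing that $\{v_{T,\bfp}:T\subseteq S\}$ is an orthonormal basis of $V_S$. There is no substantive difference in strategy.
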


From Proposition~\ref{thm:Coeffk} we immediately deduce that the coefficient of $u_S$ (i.e., the leading coefficient) in the multilinear representation of $f_{S,\p}$ is given by
\begin{equation}\label{eq:sd56fsd}
I_{\mathrm{B},\bfp}(f,S) ~=~ \frac{\langle f, v_{S,\bfp}\rangle}{\prod_{i\in S}\sqrt{p_i(1-p_i)}}~,
\end{equation}
which is precisely the weighted Banzhaf interaction index introduced in \cite{MarMat11} by means of the corresponding $k$th approximation problem. In the non-weighted case (i.e., when $\bfp=\boldsymbol{\frac{1}{2}}$), Eq.~(\ref{eq:sd56fsd}) reduces to (\ref{scalprodinter}).

By analogy with Proposition \ref{prop:influenceapprox} we now propose the following definition of weighted Banzhaf influence index.

\begin{definition}\label{def:df67f5}
Let $\Phi_{\mathrm{B},\bfp}\colon\mathcal{F}^N\times 2^N\to\R$ be defined as $\Phi_{\mathrm{B},\bfp}(f,S)=f_{S,\p}(S)-f_{S,\p}(\varnothing).$
\end{definition}

We now provide various explicit expressions for $\Phi_{\mathrm{B},\bfp}(f,S)$ in terms of the weighted Banzhaf interaction index, the M\"obius transform of $f$, and the $f$ values.

We start with the following result, which is the weighted counterpart of Proposition~\ref{propinfinter}.

\begin{proposition}\label{prop:752}
For every $f\colon\{0,1\}^n\to\R$ and every $S\subseteq N$, we have
\begin{equation}\label{eq:ds6fss}
\Phi_{\mathrm{B},\bfp}(f,S) ~=~ \sum_{T\subseteq S}I_{\mathrm{B},\p}(f,T)\,\bigg(\prod_{i\in T}(1-p_i)-(-1)^{|T|}\,\prod_{i\in T}p_i\bigg)\, .
\end{equation}
\end{proposition}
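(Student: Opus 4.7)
The plan is to proceed directly from Definition~\ref{def:df67f5} together with the explicit formula (\ref{eq:akS2}) for the best $S$-approximation and the identification (\ref{eq:sd56fsd}) of the leading coefficient with $I_{\mathrm{B},\bfp}(f,T)$. No integration or clever manipulation should be needed; the whole argument is a bookkeeping computation mirroring the proof of Proposition~\ref{prop:influenceapprox}, with the functions $v_T$ replaced by their $\bfp$-weighted analogues $v_{T,\bfp}$.

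First, I would substitute (\ref{eq:akS2}) into the definition of $\Phi_{\mathrm{B},\bfp}(f,S)$ to obtain
\[
\Phi_{\mathrm{B},\bfp}(f,S) ~=~ \sum_{T\subseteq S}\langle f,v_{T,\bfp}\rangle\,\bigl(v_{T,\bfp}(S)-v_{T,\bfp}(\varnothing)\bigr).
\]
Next, I would use the explicit definition (\ref{eq:asfkS2}) of $v_{T,\bfp}$ to evaluate the two terms in the parenthesis. Since $T\subseteq S$ gives $x_i=1$ for every $i\in T$, we have $v_{T,\bfp}(S)=\prod_{i\in T}(1-p_i)/\sqrt{p_i(1-p_i)}$, while $v_{T,\bfp}(\varnothing)=(-1)^{|T|}\prod_{i\in T}p_i/\sqrt{p_i(1-p_i)}$. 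Therefore
\[
v_{T,\bfp}(S)-v_{T,\bfp}(\varnothing) ~=~ \frac{1}{\prod_{i\in T}\sqrt{p_i(1-p_i)}}\,\biggl(\prod_{i\in T}(1-p_i)-(-1)^{|T|}\prod_{i\in T}p_i\biggr).
\]

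Finally, I would use (\ref{eq:sd56fsd}) in the form $\langle f,v_{T,\bfp}\rangle = I_{\mathrm{B},\bfp}(f,T)\,\prod_{i\in T}\sqrt{p_i(1-p_i)}$, so that the normalization factors cancel and we are left precisely with the announced identity (\ref{eq:ds6fss}). The only point that requires a bit of care is tracking the sign: the factor $(-1)^{|T|}$ in the formula arises from $v_{T,\bfp}(\varnothing)$ because each factor $(0-p_i)/\sqrt{p_i(1-p_i)}$ contributes one minus sign. Apart from this, the computation is routine and there is no real obstacle, which reflects the fact that Definition~\ref{def:df67f5} was tailored so that Proposition~\ref{prop:influenceapprox} extends verbatim to the weighted setting.
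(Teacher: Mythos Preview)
Your proposal is correct and follows essentially the same route as the paper: substitute (\ref{eq:akS2}) into Definition~\ref{def:df67f5}, evaluate $v_{T,\bfp}(S)$ and $v_{T,\bfp}(\varnothing)$ via (\ref{eq:asfkS2}), and then apply (\ref{eq:sd56fsd}) to cancel the normalization factors. The paper compresses the two middle steps into a single displayed equation, but the logic and the ingredients are identical.
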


\begin{proof}
Using Definition~\ref{def:df67f5} and Eqs.~(\ref{eq:akS2}) and (\ref{eq:asfkS2}), we obtain
\begin{equation}\label{eq:sd67sdfdf89}
\Phi_{\mathrm{B},\bfp}(f,S) ~=~ \sum_{T\subseteq S}\langle f,v_{T,\bfp}\rangle\,\bigg(\prod_{i\in T}\frac{1-p_i}{\sqrt{p_i(1-p_i)}}-(-1)^{|T|}\,\prod_{i\in T}\frac{p_i}{\sqrt{p_i(1-p_i)}}\bigg)\, .
\end{equation}
We then conclude by (\ref{eq:sd56fsd}).
\end{proof}

Using the expression of the weighted Banzhaf interaction index in terms of the M\"obius transform of $f$, that is,
\begin{equation}\label{eq:sdf4sdf}
I_{\mathrm{B},\p}(f,S) ~=~ \sum_{T\supseteq S}a(T)\,\prod_{i\in T\setminus S}p_i
\end{equation}
(see \cite{MarMat11}), we can obtain the corresponding expression for the weighted Banzhaf influence index. To this extent, recall the binomial product formula
\begin{equation}\label{eq:BinProdForm56}
\sum_{T\subseteq N}\prod_{i\in T}a_i\,\prod_{i\in N\setminus T}b_i ~=~ \prod_{i\in N}(a_i+b_i)\, .
\end{equation}

\begin{proposition}\label{prop:753}
For every $f\colon\{0,1\}^n\to\R$ and every $S\subseteq N$, we have
\begin{equation}\label{eq:as5d4d}
\Phi_{\mathrm{B},\bfp}(f,S) ~=~ \sum_{\textstyle{T\subseteq N\atop T\cap S\neq\varnothing}}a(T)\,\prod_{i\in T\setminus S}p_i\, .
\end{equation}
\end{proposition}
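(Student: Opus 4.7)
The plan is to start from the expansion of $\Phi_{\mathrm{B},\bfp}(f,S)$ provided by Proposition~\ref{prop:752}, then substitute the M\"obius-type expression (\ref{eq:sdf4sdf}) for each $I_{\mathrm{B},\p}(f,T)$, and finally swap the order of summation so that the M\"obius coefficients $a(K)$ appear as outer weights. For each fixed $K\subseteq N$, the inner index $T$ runs over $T\subseteq K\cap S$, and one has to show that the resulting inner sum equals $\prod_{i\in K\setminus S}p_i$ when $K\cap S\neq\varnothing$ and vanishes otherwise.

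More precisely, after the substitution the inner sum factors as
\[
\Bigl(\prod_{i\in K\setminus S}p_i\Bigr)\sum_{T\subseteq K\cap S}\Bigl(\prod_{i\in (K\cap S)\setminus T}p_i\prod_{i\in T}(1-p_i)-(-1)^{|T|}\prod_{i\in K\cap S}p_i\Bigr),
\]
since, writing $A=K\cap S$ and $B=K\setminus S$, we have $K\setminus T=(A\setminus T)\cup B$ for every $T\subseteq A$, and $\prod_{i\in A\setminus T}p_i\prod_{i\in T}p_i=\prod_{i\in A}p_i$. The first piece of the inner sum collapses by the binomial product formula (\ref{eq:BinProdForm56}) to $\prod_{i\in A}(p_i+(1-p_i))=1$. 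The second piece factors as $\prod_{i\in A}p_i\sum_{T\subseteq A}(-1)^{|T|}$, which equals $1$ if $A=\varnothing$ and $0$ otherwise. Subtracting gives the indicator $[K\cap S\neq\varnothing]$, so the outer sum over $K$ reduces exactly to (\ref{eq:as5d4d}).

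The only non-routine step is the combinatorial simplification of the inner sum, and the main potential obstacle is bookkeeping: one must correctly track which variables in $K$ are accompanied by a factor $p_i$, which by $1-p_i$, and which by $p_i$ alone after the cancellation in the second piece. The splitting $K=(K\cap S)\cup(K\setminus S)$ makes the dependence on $S$ transparent and isolates $\prod_{i\in K\setminus S}p_i$ as a common factor, after which the two binomial-type sums take their cleanest form and the $[K\cap S\neq\varnothing]$ restriction emerges automatically.
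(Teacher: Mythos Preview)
Your proposal is correct and follows essentially the same route as the paper: start from Proposition~\ref{prop:752}, substitute (\ref{eq:sdf4sdf}), swap the order of summation, factor out $\prod_{i\in K\setminus S}p_i$, and reduce the inner sum via the binomial product formula. The only cosmetic difference is that the paper keeps the second piece in the form $\prod_{i\in R}(-p_i)$ and applies (\ref{eq:BinProdForm56}) directly to obtain $\prod_{i\in K\cap S}(p_i-p_i)$, whereas you first rewrite it as $(-1)^{|T|}\prod_{i\in K\cap S}p_i$ and then sum the signs; both yield the indicator $[K\cap S\neq\varnothing]$ in the same way.
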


\begin{proof}
Combining (\ref{eq:ds6fss}) with (\ref{eq:sdf4sdf}), we obtain
\begin{eqnarray}
\Phi_{\mathrm{B},\bfp}(f,S) &=& \sum_{R\subseteq S}\,\sum_{T\supseteq R}a(T)\,\prod_{i\in T\setminus R}p_i\,\bigg(\prod_{i\in R}(1-p_i)-\prod_{i\in R}(-p_i)\bigg)\nonumber\\
&=& \sum_{\textstyle{T\subseteq N\atop T\cap S\neq\varnothing}}a(T)\,\prod_{i\in T\setminus S}p_i\,\sum_{R\subseteq T\cap S}\,\prod_{i\in (T\cap S)\setminus R}p_i\,\bigg(\prod_{i\in R}(1-p_i)-\prod_{i\in R}(-p_i)\bigg)\, .\label{eq:sdff43}
\end{eqnarray}
Using the binomial product formula (\ref{eq:BinProdForm56}), we see that the inner sum in (\ref{eq:sdff43}) becomes $1-\prod_{i\in T\cap S}(p_i-p_i)=1$. This completes the proof of the proposition.
\end{proof}

Interestingly, Eqs.~(\ref{eq:sdf4sdf}) and (\ref{eq:as5d4d}) show that both $I_{\mathrm{B},\p}(f,S)$ and $\Phi_{\mathrm{B},\p}(f,S)$ are independent of those $p_i$ such that $i\in S$.

A \emph{generalized value} \cite{MarKojFuj07} is a mapping $G\colon\mathcal{F}^N\times 2^{N}\to\R$ defined by
\begin{equation}\label{gv}
G(f,S) ~=~ \sum_{T\subseteq N\setminus S}p_T^S(f(T\cup S)-f(T))\, ,
\end{equation}
where the coefficients $p_T^S$ are real numbers for every $S\subseteq N$ and every $T\subseteq N\setminus S$.

The following lemma gives an expression for $G(f,S)$ in terms of the M\"obius transform of $f$. The proof is given in Appendix~\ref{sec:aa}.

\begin{lemma}\label{lem:pq}
A mapping $G\colon \mathcal{F}^N\times 2^{N}\to\R$ of the form
\begin{equation}\label{gv1}
G(f,S) ~=~ \sum_{\textstyle{R\subseteq N\atop R\cap S\neq\varnothing}} q_R^S\, a(R)\, ,
\end{equation}
where $a$ is the M\"obius transform of $f$, defines a generalized value if and only if the coefficients $q_R^S$ depend only on $S$ and $R\setminus S$. In this case, the conversion between
(\ref{gv}) and (\ref{gv1}) is given by
\[
q_R^S ~=~\sum_{T:R\setminus S\subseteq T\subseteq N\setminus S} p_T^S\quad\mbox{and}\quad p_T^S ~=~ \sum_{R: T\subseteq R\subseteq N\setminus S}(-1)^{|R|-|T|}\, q_{R\cup S}^S\, .
\]
\end{lemma}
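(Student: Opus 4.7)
The plan is to expand the right-hand side of (\ref{gv}) in terms of the Möbius transform $a$ of $f$, match the result to (\ref{gv1}), and then recognize the emerging relation as a classical zeta transform on the Boolean lattice $2^{N\setminus S}$, to which Möbius inversion applies. The case $S=\varnothing$ is trivial (both (\ref{gv}) and (\ref{gv1}) collapse to $0$), so I will assume $S\neq\varnothing$ throughout.

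For the ``only if'' direction I substitute $f=\sum_R a(R)\,u_R$ into (\ref{gv}). Because $T\cap S=\varnothing$ for every $T$ in the outer sum, $u_R(T\cup S)-u_R(T)$ equals $1$ when $R\cap S\neq\varnothing$ and $R\setminus S\subseteq T$, and $0$ otherwise. Exchanging the order of summation then yields
\[
G(f,S) ~=~ \sum_{R\subseteq N,\, R\cap S\neq\varnothing} a(R)\,\sum_{T:\, R\setminus S\subseteq T\subseteq N\setminus S} p_T^S,
\]
which is exactly (\ref{gv1}) with $q_R^S=\sum_{T:\, R\setminus S\subseteq T\subseteq N\setminus S} p_T^S$. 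This coefficient depends only on $S$ and $R\setminus S$, and it is precisely the first conversion formula.

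For the converse, assume $q_R^S$ depends only on $S$ and $R\setminus S$, and write $\tilde q_A^S:=q_{A\cup S}^S$ for $A\subseteq N\setminus S$ (well-defined by the assumption). I would then define
\[
p_T^S ~:=~ \sum_{R:\, T\subseteq R\subseteq N\setminus S} (-1)^{|R|-|T|}\,\tilde q_R^S ~=~ \sum_{R:\, T\subseteq R\subseteq N\setminus S} (-1)^{|R|-|T|}\, q_{R\cup S}^S,
\]
which is the second conversion formula. By Möbius inversion on $2^{N\setminus S}$ this is equivalent to $\tilde q_A^S=\sum_{T:\, A\subseteq T\subseteq N\setminus S} p_T^S$, i.e.\ to the first conversion formula. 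Plugging these $p_T^S$ into (\ref{gv}) and re-running the calculation of the preceding paragraph then recovers $G$ in the form (\ref{gv1}) with exactly the prescribed $q_R^S$, so $G$ is a generalized value.

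The only genuinely technical step is the exchange of sums in the ``only if'' direction; once one observes that the constraint $T\subseteq N\setminus S$ forces $u_R(T\cup S)-u_R(T)$ to vanish except when $R\cap S\neq\varnothing$ and $R\setminus S\subseteq T$, both implications and both conversion formulas fall out together, and I anticipate no obstacle beyond careful bookkeeping.
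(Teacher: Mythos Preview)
Your argument is correct. The forward direction is essentially identical to the paper's: you expand $f$ through the M\"obius/unanimity basis and interchange the sums, while the paper expands $f(T\cup S)-f(T)=\sum_{R\subseteq T\cup S}a(R)-\sum_{R\subseteq T}a(R)$ directly; both routes land on $q_R^S=\sum_{T:\,R\setminus S\subseteq T\subseteq N\setminus S}p_T^S$.

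The converse, however, is handled differently. You observe that the forward computation exhibits $q_R^S$ as the zeta transform of $p_T^S$ on the Boolean lattice $2^{N\setminus S}$, so defining $p_T^S$ by M\"obius inversion and re-running the forward step immediately closes the loop. The paper instead substitutes $a(R)=\sum_{T\subseteq R}(-1)^{|R|-|T|}f(T)$ into (\ref{gv1}) and carries out an explicit double-sum computation, splitting each $R$ into $R'=R\setminus S$ and $R''=R\cap S$ and then summing over $R''$ to show the coefficient of $f(T)$ collapses to the claimed $p_T^S(f(T\cup S)-f(T))$ pattern. Your approach is shorter and more conceptual, trading the paper's hands-on verification for a clean appeal to M\"obius inversion on $2^{N\setminus S}$; the paper's computation, on the other hand, makes the mechanism by which the dependence of $q_R^S$ on $R\cap S$ is ``forgotten'' completely explicit. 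One small point you leave implicit (and so does the paper) is that the coefficients $q_R^S$ in (\ref{gv1}) are uniquely determined by $G(\cdot,S)$, since the M\"obius values $a(R)$ with $R\cap S\neq\varnothing$ can be prescribed independently; this is what lets you identify the $q_R^S$ produced by the forward step with the given ones.
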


The following proposition shows that the weighted Banzhaf influence index $\Phi_{\mathrm{B},\p}$ is a particular generalized value.

\begin{proposition}\label{prop:756}
For every $f\colon\{0,1\}^n\to\R$ and every $S\subseteq N$, we have
\[
\Phi_{\mathrm{B},\p}(f,S) ~=~ \sum_{T\subseteq N\setminus S}p_T^S\,\big(f(T\cup S)-f(T)\big)\, ,
\]
where the coefficients
\begin{equation}\label{eq:ptsn}
p_T^S ~=~ \prod_{i\in T}p_i\,\prod_{i\in N\setminus (S\cup T)}(1-p_i)
\end{equation}
satisfy the conditions $p_T^S\geqslant 0$ and $\sum_{T\subseteq N\setminus S}p_T^S=1$.
\end{proposition}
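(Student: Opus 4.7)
The plan is to deduce the generalized value form of $\Phi_{\mathrm{B},\p}$ directly from Proposition~\ref{prop:753} by inverting the M\"obius-transform expression via Lemma~\ref{lem:pq}. The right-hand side of~(\ref{eq:as5d4d}) is already of the form~(\ref{gv1}) with $q_R^S=\prod_{i\in R\setminus S}p_i$, and this coefficient depends on $R$ only through $R\setminus S$. Hence Lemma~\ref{lem:pq} ensures that $\Phi_{\mathrm{B},\p}$ is a generalized value and supplies the inversion
$$
p_T^S ~=~ \sum_{R:\, T\subseteq R\subseteq N\setminus S}(-1)^{|R|-|T|}\,q_{R\cup S}^S ~=~ \sum_{R:\, T\subseteq R\subseteq N\setminus S}(-1)^{|R|-|T|}\,\prod_{i\in R}p_i,
$$
where the second equality uses $(R\cup S)\setminus S=R$, which holds since $R\subseteq N\setminus S$.

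To match this with~(\ref{eq:ptsn}), I would write $R=T\cup R'$ with $R'\subseteq (N\setminus S)\setminus T$, factor out $\prod_{i\in T}p_i$, and apply the binomial product formula~(\ref{eq:BinProdForm56}) with $a_i=-p_i$ and $b_i=1$ on the index set $(N\setminus S)\setminus T$. The alternating sum then collapses to $\prod_{i\in N\setminus(S\cup T)}(1-p_i)$, yielding exactly the closed form~(\ref{eq:ptsn}).

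The two remaining assertions are immediate. Nonnegativity $p_T^S\geqslant 0$ follows at once from $0<p_i<1$, and a second application of~(\ref{eq:BinProdForm56}), this time with $a_i=p_i$ and $b_i=1-p_i$ on the index set $N\setminus S$, gives
$$
\sum_{T\subseteq N\setminus S}p_T^S ~=~ \prod_{i\in N\setminus S}\bigl(p_i+(1-p_i)\bigr) ~=~ 1.
$$
I do not anticipate any genuine obstacle: the proof reduces to one invocation of the inversion in Lemma~\ref{lem:pq} followed by two instances of the binomial product formula. The only care-point is the bookkeeping of index sets when decomposing $R$ into $T$ and $R'=R\setminus T$ inside $N\setminus S$.
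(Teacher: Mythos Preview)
Your proposal is correct and follows essentially the same approach as the paper: invoke Proposition~\ref{prop:753} to recognize the form~(\ref{gv1}) with $q_R^S=\prod_{i\in R\setminus S}p_i$, apply the inversion in Lemma~\ref{lem:pq}, and collapse the alternating sum via the binomial product formula~(\ref{eq:BinProdForm56}). Your write-up is in fact slightly more explicit than the paper's, since you spell out the verifications of $p_T^S\geqslant 0$ and $\sum_{T\subseteq N\setminus S}p_T^S=1$, which the paper leaves to the reader.
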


\begin{proof}
Proposition~\ref{prop:753} and Lemma~\ref{lem:pq} show that $\Phi_{\mathrm{B},\p}$ is a generalized value with $q_R^S=\prod_{i\in R\setminus S}p_i$. By Lemma~\ref{lem:pq} we then have
\[
p_T^S ~=~ \sum_{R: T\subseteq R\subseteq N\setminus S}(-1)^{|R|-|T|}\,\prod_{i\in R}p_i ~=~ \prod_{i\in T}p_i\sum_{R: T\subseteq R\subseteq N\setminus S}\,\prod_{i\in R\setminus T}(-p_i).
\]
The result then follows from the binomial product formula (\ref{eq:BinProdForm56}).
\end{proof}

The coefficients $p_T^S$ given in (\ref{eq:ptsn}) coincide with those of the corresponding expression for the weighted Banzhaf interaction index (see \cite[Theorem~10]{MarMat11}). Therefore, we immediately derive the following interpretations of these coefficients (see \cite[Proposition~11]{MarMat11}). For every $S\subseteq N$ and every $T\subseteq N\setminus S$, we have
$$
p_T^S ~=~ \Pr(T\subseteq C\subseteq S\cup T) ~=~ \Pr(C=S\cup T\mid C\supseteq S) ~=~ \Pr(C=T\mid C\subseteq N\setminus S)\, ,
$$
where $C$ denotes a random coalition.

For every $S\subseteq N$, define the linear operator $\sigma_S$ for functions on $\{0,1\}^n$ or $[0,1]^n$ by
$$
\sigma_Sf(\bfx) ~=~ f(\bfx\mid x_i=1\,\forall i\in S)-f(\bfx\mid x_i=0\,\forall i\in S)\, .
$$
For instance, when applied to the unanimity game $u_T$ ($T\subseteq N$), we obtain
\begin{equation}\label{eq:sdsfsf}
\sigma_Su_T ~=~
\begin{cases}
u_{T\setminus S}\, , & \mbox{if $S\cap T\neq\varnothing$}\, ,\\
0\, , & \mbox{otherwise}\, .
\end{cases}
\end{equation}

The next result gives various expressions for $\Phi_{\mathrm{B},\p}(f,S)$ in terms of the function $\sigma_Sf$. Recall first that, for every function $f\colon\{0,1\}^n\to\R$, we have
\begin{equation}\label{eq:5szd4f}
\bar{f}(\p) ~=~ \sum_{\bfx\in\{0,1\}^n}w(\bfx)\,f(\bfx) ~=~ E[f(C)]\, ,
\end{equation}
where $C$ denotes a random coalition  (see \cite{Owe88} or \cite[Proposition~4]{MarMat11}).

\begin{proposition}
For every $f\colon\{0,1\}^n\to\R$ and every $S\subseteq N$, we have
\begin{equation}\label{eq:ffdsf5}
\Phi_{\mathrm{B},\p}(f,S) ~=~ (\sigma_S\bar{f})(\p) ~=~ \sum_{\bfx\in\{0,1\}^n}w(\bfx)\,\sigma_Sf(\bfx) ~=~ E\big[(\sigma_Sf)(C)\big]\, ,
\end{equation}
where $C$ denotes a random coalition.
\end{proposition}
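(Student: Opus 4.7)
The plan is to establish the chain of three equalities one by one, relying on Proposition~\ref{prop:753} to identify $\Phi_{\mathrm{B},\p}(f,S)$ with an expression in the Möbius transform, and then on (\ref{eq:5szd4f}) to match the expected-value expression. The last equality is just the definition of $E$, so the work is in the first two.

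First I would expand $\bar{f}$ in the unanimity basis as $\bar{f}=\sum_{T\subseteq N}a(T)\,u_T$ and apply the linear operator $\sigma_S$ termwise. By the identity (\ref{eq:sdsfsf}) for $\sigma_S u_T$, only terms with $T\cap S\neq\varnothing$ survive, each contributing $a(T)\,u_{T\setminus S}$. Evaluating at $\p$ therefore gives
\[
(\sigma_S\bar{f})(\p) ~=~ \sum_{\textstyle{T\subseteq N\atop T\cap S\neq\varnothing}} a(T)\,\prod_{i\in T\setminus S} p_i\, ,
\]
which is exactly the right-hand side of (\ref{eq:as5d4d}). This establishes $\Phi_{\mathrm{B},\p}(f,S)=(\sigma_S\bar{f})(\p)$.

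For the middle equality I would use (\ref{eq:5szd4f}) applied to the pseudo-Boolean function $\sigma_S f\colon\{0,1\}^n\to\R$, which yields
\[
\overline{\sigma_S f}(\p) ~=~ \sum_{\bfx\in\{0,1\}^n} w(\bfx)\,\sigma_Sf(\bfx) ~=~ E[(\sigma_Sf)(C)]\, .
\]
It remains to observe that $\sigma_S\bar{f}$ and $\overline{\sigma_Sf}$ agree as multilinear polynomials in $\bfx\in[0,1]^n$: the function $\sigma_S\bar{f}$ is obtained from the multilinear polynomial $\bar{f}$ by substituting $x_i=1$ and $x_i=0$ for $i\in S$ in the two respective copies, so it is multilinear in the remaining variables, and it coincides with $\sigma_Sf$ on $\{0,1\}^n$. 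By the uniqueness of the multilinear extension this forces $\sigma_S\bar{f}=\overline{\sigma_Sf}$, and in particular the two values at $\p$ coincide.

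The last equality $\sum_{\bfx}w(\bfx)\,\sigma_Sf(\bfx)=E[(\sigma_Sf)(C)]$ is just the definition of the expectation under the distribution $w$, given the independence assumption (\ref{eq:indep}). I do not expect a real obstacle here; the only point one must be careful about is the identification $\sigma_S\bar{f}=\overline{\sigma_Sf}$, i.e., that applying $\sigma_S$ commutes with taking the multilinear extension, which is justified by the uniqueness argument above.
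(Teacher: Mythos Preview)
Your proof is correct and follows essentially the same route as the paper: the first equality comes from combining (\ref{eq:as5d4d}) with (\ref{eq:sdsfsf}), and the remaining equalities from applying (\ref{eq:5szd4f}) to $\sigma_S f$. You are simply more explicit than the paper about the identification $\sigma_S\bar{f}=\overline{\sigma_S f}$, which the paper leaves implicit.
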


\begin{proof}
The first equality immediately follows from Eqs.~(\ref{eq:as5d4d}) and (\ref{eq:sdsfsf}). The other equalities immediately follow from (\ref{eq:5szd4f}).
\end{proof}

Interestingly, (\ref{eq:ffdsf5}) shows a strong analogy with the identities (see \cite[Propositions~4 and 9]{MarMat11})
\begin{equation}\label{eq:ffdsf51}
I_{\mathrm{B},\p}(f,S) ~=~ (D_S\bar{f})(\p) ~=~ \sum_{\bfx\in\{0,1\}^n}w(\bfx)\,\Delta_Sf(\bfx) ~=~ E\big[(\Delta_Sf)(C)\big]\, .
\end{equation}

We also have the following expression for $\Phi_{\mathrm{B},\p}(f,S)$ as an integral. We omit the proof since it follows exactly the same steps as in the proof of the corresponding expression for $I_{\mathrm{B},\p}(f,S)$ (see \cite[Proposition 12]{MarMat11}).

\begin{proposition}\label{prop:jhfh43}
Let $F_1,\ldots,F_n$ be cumulative distribution functions on $[0,1]$. Then
$$
\Phi_{\mathrm{B},\mathbf{p}}(f,S)=\int_{[0,1]^n}(\sigma_S\bar{f})(\mathbf{x})\, dF_1(x_1)\cdots dF_n(x_n)
$$
for every $f\colon\{0,1\}^n\to\R$ and every $S\subseteq N$ if and only if $p_i=\int_0^1x\, dF_i(x)$ for every $i\in N$.
\end{proposition}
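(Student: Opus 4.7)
The plan is to reduce everything to the identity $\Phi_{\mathrm{B},\mathbf{p}}(f,S)=(\sigma_S\bar{f})(\mathbf{p})$ established in Eq.~(\ref{eq:ffdsf5}), using the crucial observation that $\sigma_S\bar{f}$ is still a multilinear polynomial on $[0,1]^n$ (substituting $0$ or $1$ into a multilinear polynomial keeps it multilinear in the remaining variables). This makes the integral against a product measure collapse to an evaluation at the vector of means.

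First, I would establish the following computational lemma by Fubini: for any multilinear polynomial $q(\mathbf{x})=\sum_{T\subseteq N}c_T\prod_{i\in T}x_i$ on $[0,1]^n$,
$$
\int_{[0,1]^n}q(\mathbf{x})\,dF_1(x_1)\cdots dF_n(x_n) \;=\; \sum_{T\subseteq N}c_T\prod_{i\in T}\int_0^1 x_i\,dF_i(x_i) \;=\; q(\mu_1,\ldots,\mu_n),
$$
where $\mu_i:=\int_0^1 x\,dF_i(x)$. Applying this with $q=\sigma_S\bar{f}$, which is multilinear by the observation above, gives that the right-hand side of the proposed identity equals $(\sigma_S\bar{f})(\mu_1,\ldots,\mu_n)$.

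The sufficiency direction is then immediate: if $p_i=\mu_i$ for every $i$, combining the lemma with Eq.~(\ref{eq:ffdsf5}) yields the integral identity. For necessity, I would use specific test functions to isolate the coordinates. Assuming $n\geqslant 2$, fix $j\in N$, pick any $i\neq j$, and take $f=u_{\{i,j\}}$ and $S=\{i\}$. Then $\bar{f}(\mathbf{x})=x_ix_j$, and by (\ref{eq:sdsfsf}) we have $\sigma_{\{i\}}\bar{f}(\mathbf{x})=x_j$. The assumed identity together with Eq.~(\ref{eq:ffdsf5}) then forces $p_j=\mu_j$, and varying $j$ gives the full conclusion.

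I do not anticipate any serious obstacle: the only mildly subtle point is recognizing that $\sigma_S\bar{f}$ is multilinear so that Fubini reduces the integral to a polynomial evaluation. The main care is in picking the right test functions for the necessity direction; the pair $(u_{\{i,j\}},\{i\})$ works cleanly because $\sigma_{\{i\}}\bar{u}_{\{i,j\}}$ is the pure monomial $x_j$, which directly exposes one coordinate $p_j=\mu_j$ at a time. The degenerate case $n=1$ is outside the scope of the test-function argument, but there $\sigma_{\{1\}}\bar{f}$ is a constant and both sides of the identity always agree, so the statement is understood under $n\geqslant 2$.
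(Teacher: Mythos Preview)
Your argument is correct. The paper in fact omits the proof of this proposition, remarking only that it follows the same steps as the proof of the analogous integral representation of $I_{\mathrm{B},\mathbf{p}}$ in \cite[Proposition~12]{MarMat11}; your proof---reducing the integral of the multilinear function $\sigma_S\bar{f}$ against a product measure to evaluation at the mean vector via Fubini, invoking Eq.~(\ref{eq:ffdsf5}), and then isolating each coordinate with the test pair $(u_{\{i,j\}},\{i\})$---is exactly the natural instantiation of that template. Your caveat about $n=1$ (where $\sigma_{\{1\}}\bar{f}$ is constant and the identity holds regardless of $p_1$, so the ``only if'' direction becomes vacuous) is a genuine observation about the statement itself rather than a gap in your argument.
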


We now generalize Proposition~\ref{prop:pscalIm} to the weighted case. To this aim, consider the function $g_{S,\p}\colon\{0,1\}^n\to\R$ defined by
$$
g_{S,\p}(\bfx) ~=~ \prod_{i\in S}\frac{x_i}{p_i}-\prod_{i\in S}\frac{1-x_i}{1-p_i}\, .
$$

\begin{proposition}
For every $f\colon\{0,1\}^n\to\R$ and every $S\subseteq N$, we have
\begin{equation}\label{eq:s67df}
\Phi_{\mathrm{B},\p}(f,S) ~=~ \langle f,g_{S,\p}\rangle ~=~ \sum_{\bfx\in\{0,1\}^n}w(\bfx)\, f(\bfx)\, \bigg(\prod_{i\in S}\frac{x_i}{p_i}-\prod_{i\in S}\frac{1-x_i}{1-p_i}\bigg)
\end{equation}
and
\begin{equation}\label{eq:s67df2}
\Phi_{\mathrm{B},\p}(f,S) ~=~ \sum_{\bfx\in\{0,1\}^n}f(\bfx)\,\frac{g_S(\bfx)}{2^{|S|}}\,\prod_{i\in N\setminus S}p_i^{x_i}\,(1-p_i)^{1-x_i}\, .
\end{equation}
\end{proposition}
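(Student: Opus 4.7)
The plan is to start from the already-established identity
$$
\Phi_{\mathrm{B},\mathbf{p}}(f,S) ~=~ \sum_{\bfx\in\{0,1\}^n}w(\bfx)\,\sigma_Sf(\bfx)
$$
from the preceding proposition and show that the right-hand side can be massaged into both (\ref{eq:s67df}) and (\ref{eq:s67df2}) via the product form $w(\bfx)=\prod_i p_i^{x_i}(1-p_i)^{1-x_i}$.

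First I would prove (\ref{eq:s67df}). The key observation is that the weights in $g_{S,\p}$ are tailored so that the factor $w(\bfx)\prod_{i\in S}\frac{x_i}{p_i}$ vanishes unless $x_i=1$ for every $i\in S$, in which case the $S$-factors of $w(\bfx)$ collapse and leave exactly the marginal weight $\prod_{i\in N\setminus S}p_i^{x_i}(1-p_i)^{1-x_i}$. Concretely, for $i\in S$ one checks
$$
p_i^{x_i}(1-p_i)^{1-x_i}\,\frac{x_i}{p_i} ~=~ x_i,\qquad p_i^{x_i}(1-p_i)^{1-x_i}\,\frac{1-x_i}{1-p_i} ~=~ 1-x_i.
$$
Multiplying these over $i\in S$ and summing, the first term of $\langle f,g_{S,\p}\rangle$ reduces to $\sum_{\bfx\mid x_S=\mathbf{1}}f(\bfx)\prod_{i\in N\setminus S}p_i^{x_i}(1-p_i)^{1-x_i}$, and the second term similarly reduces to $\sum_{\bfx\mid x_S=\mathbf{0}}f(\bfx)\prod_{i\in N\setminus S}p_i^{x_i}(1-p_i)^{1-x_i}$. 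Their difference is precisely $\sum_{\bfx}w(\bfx)\sigma_Sf(\bfx)$, which by the previous proposition equals $\Phi_{\mathrm{B},\p}(f,S)$.

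For (\ref{eq:s67df2}) I would simply observe that the computation above, when combined with $\frac{g_S(\bfx)}{2^{|S|}}=\prod_{i\in S}x_i-\prod_{i\in S}(1-x_i)$, gives the pointwise identity
$$
w(\bfx)\,g_{S,\p}(\bfx) ~=~ \frac{g_S(\bfx)}{2^{|S|}}\,\prod_{i\in N\setminus S}p_i^{x_i}(1-p_i)^{1-x_i}.
$$
Substituting this into (\ref{eq:s67df}) yields (\ref{eq:s67df2}) immediately.

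There is no real obstacle here: the proof is essentially bookkeeping that exploits how the denominators $p_i$ and $1-p_i$ in $g_{S,\p}$ exactly cancel the $S$-part of $w(\bfx)$. The only subtlety worth writing out carefully is that $\prod_{i\in S}\frac{x_i}{p_i}$ should be read as a $\{0,1\}$-valued indicator of $\{x_S=\mathbf{1}\}$ scaled by $\prod_{i\in S}p_i^{-1}$, and similarly for the other product; this is what makes the expressions of $g_{S,\p}$ and $g_S$ line up after incorporating the weights.
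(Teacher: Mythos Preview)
Your argument is correct. For the second identity (\ref{eq:s67df2}) you do exactly what the paper does: verify the pointwise equality $w(\bfx)\,g_{S,\p}(\bfx)=\frac{g_S(\bfx)}{2^{|S|}}\prod_{i\in N\setminus S}p_i^{x_i}(1-p_i)^{1-x_i}$ and substitute. For the first identity (\ref{eq:s67df}), however, the paper proceeds differently. It goes back to the orthonormal expansion (\ref{eq:sd67sdfdf89}) obtained in the proof of Proposition~\ref{prop:752}, reads it as $\Phi_{\mathrm{B},\p}(f,S)=\langle f,g'_{S,\p}\rangle$ with
\[
g'_{S,\p}(\bfx) ~=~ \sum_{T\subseteq S}\bigg(\prod_{i\in T}\frac{x_i-p_i}{p_i}-(-1)^{|T|}\prod_{i\in T}\frac{x_i-p_i}{1-p_i}\bigg),
\]
and then collapses $g'_{S,\p}$ to $g_{S,\p}$ via the binomial product formula. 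Your route is more elementary: you start instead from (\ref{eq:ffdsf5}) and exploit directly that the denominators in $g_{S,\p}$ cancel the $S$-factors of $w(\bfx)$, turning $\langle f,g_{S,\p}\rangle$ into the marginal-weight average of $\sigma_Sf$ without any basis manipulation or use of (\ref{eq:BinProdForm56}). The paper's approach has the mild advantage of tying the result explicitly to the projection formula (\ref{eq:akS2}); yours is shorter and makes the probabilistic cancellation completely transparent.
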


\begin{proof}
On the one hand, by substituting (\ref{eq:asfkS2}) into (\ref{eq:sd67sdfdf89}), we obtain $\Phi_{\mathrm{B},\p}(f,S) = \langle f,g'_{S,\p}\rangle$, where
$$
g'_{S,\p}(\bfx) ~=~ \sum_{T\subseteq S}\bigg(\prod_{i\in T}\frac{x_i-p_i}{p_i}-(-1)^{|T|}\,\prod_{i\in T}\frac{x_i-p_i}{1-p_i}\bigg)\, .
$$
Using the binomial product formula (\ref{eq:BinProdForm56}), we immediately see that $g'_{S,\p}=g_{S,\p}$, which proves (\ref{eq:s67df}).

On the other hand, for every $\bfx\in\{0,1\}^n$ we have
$$
g_{S,\p}(\bfx)\, w(\bfx) ~=~ g_{S,\p}(\bfx)\,\prod_{i\in N}p_i^{x_i}\,(1-p_i)^{1-x_i} ~=~ \frac{g_S(\bfx)}{2^{|S|}}\,\prod_{i\in N\setminus S}p_i^{x_i}\,(1-p_i)^{1-x_i}\, ,
$$
which, when combined with (\ref{eq:s67df}), immediately leads to (\ref{eq:s67df2}).
\end{proof}

We end this section by giving an interpretation of the Banzhaf influence index $\Phi_{\mathrm{B}}$ as a center of mass of weighted Banzhaf influence indexes $\Phi_{\mathrm{B},\p}$.

As already mentioned, the index $\Phi_{\mathrm{B}}$ can be expressed in terms of $\Phi_{\mathrm{B},\p}$ simply by setting $\p=\boldsymbol{\frac{1}{2}}$. However, by Proposition~\ref{prop:756} we also have the following expression
\begin{equation}\label{eq:imdfa89}
\Phi_{\mathrm{B}}(f,S) ~=~ \int_{[0,1]^n}\Phi_{\mathrm{B},\mathbf{p}}(f,S) \, d\mathbf{p}\, .
\end{equation}
This formula can be interpreted in the game theory context in the same way as the corresponding formula for the interaction index (see \cite[{\S}5.1]{MarMat11}). We have assumed that the players behave independently of each other to form coalitions, each player $i$ with probability $p_i\in\left]0,1\right[$. Assuming further that this probability is not known a priori, to define an influence index it is then natural to consider the average (center of mass) of the weighted
indexes over all possible choices of the probabilities $p_i$. Eq.~(\ref{eq:imdfa89}) then shows that we obtain the non-weighted influence index $\Phi_{\mathrm{B}}$.

The \emph{Shapley generalized value} \cite{Mar00,MarKojFuj07} for a function $f\colon\{0,1\}^n\to\R$ and a coalition $S\subseteq N$ is defined by
$$
\Phi_{\mathrm{Sh}}(f,S) ~=~ \sum_{\textstyle{T\subseteq N\atop T\cap S\neq\varnothing}} \frac{a(T)}{|T\setminus S|+1}\, ,
$$
where $a$ is the M\"obius transform of $f$. Using (\ref{eq:as5d4d}) we obtain the following expression for $\Phi_{\mathrm{Sh}}$ in terms of $\Phi_{\mathrm{B},\p}$, namely
\begin{equation}\label{eq:imdfa891}
\Phi_{\mathrm{Sh}}(f,S) ~=~ \int_0^1\Phi_{\mathrm{B},(p,\ldots,p)}(f,S) \, dp\, .
\end{equation}
Here the players still behave independently of each other to form coalitions but with the same probability $p$. The integral in (\ref{eq:imdfa891}) simply represents the average of the weighted indexes over all the possible probabilities.

\section{Weighted influences as alternative representations of pseudo-Boolean functions}

It is well known that the values $I_{\mathrm{B}}(f,S)$ ($S\subseteq N$) of the non-weighted Banzhaf
interaction index for a function $f\colon\{0,1\}^n\to\R$ provide an alternative representation of $f$
(see \cite{GraMarRou00}). This observation still holds in the weighted case. Indeed, combining the Taylor expansion
formula with (\ref{eq:ffdsf51}) yields (see \cite[Eq.~(16)]{MarMat11})
\begin{equation}\label{eq:7sfd6sd}
f(\bfx) ~=~ \sum_{S\subseteq N} I_{\mathrm{B},\p}(f,S)\, \prod_{i\in S}(x_i-p_i)\, .
\end{equation}
Thus, for every $\p$ the map $f\mapsto\{I_{\mathrm{B},\p}(f,S):S\subseteq N\}$ is a linear bijection.

In this section we discuss the issue of representing pseudo-Boolean functions in terms of Banzhaf influence indexes.
In fact, we compare the non-weighted and weighted versions of the Banzhaf influence indexes and show that they have different
behaviors in terms of reconstruction of the original pseudo-Boolean function from prescribed influences. In the non-weighted version we show that the index
is degenerate: roughly speaking, the values $\Phi_{\mathrm{B}}(f,S)$ ($\varnothing\neq S\subseteq N$) encode only half of the
information contained in the function $f$. In contrast, in the weighted version, for a generic weight $\p$ the values $\Phi_{\mathrm{B},\p}(f,S)$
($\varnothing\neq S\subseteq N$) allow to reconstruct $f$ up to an additive constant.

The degeneracy of the non-weighted influence index $\Phi_{\mathrm{B}}$ follows from linear relations among
the linear functionals $\Phi_{\mathrm{B}}({\,}\cdot{\,},S)$ ($S\subseteq N$) on the space $\mathcal{F}^N$.
For instance, for every $i,j\in N$ we have $g_{\{i,j\}}=g_{\{i\}}+g_{\{j\}}$, which, by Proposition~\ref{prop:pscalIm}, translates into
$$
\Phi_{\mathrm{B}}({\,}\cdot{\,},\{i,j\})~=~\Phi_{\mathrm{B}}({\,}\cdot{\,},\{i\})+\Phi_{\mathrm{B}}({\,}\cdot{\,},\{j\})\, ,\qquad i,j\in N\, .
$$
The following result generalizes this linear dependence relation.

\begin{proposition}\label{prop:interinfl}
For every $f\colon\{0,1\}^n\to\R$ and every $S\subseteq N$, we have
\begin{equation}\label{eq:as24s}
I_{\mathrm{B},\bfp}(f,S)\,\big(v_{S,\bfp}(S)-v_{S,\bfp}(\varnothing)\big)\prod_{i\in S}\sqrt{p_i(1-p_i)} ~=~ \sum_{T\subseteq S}(-1)^{|S|-|T|}\,\Phi_{\mathrm{B},\p}(f,T)\, .
\end{equation}
\end{proposition}
\begin{proof}
Just apply the M\"obius inversion formula to (\ref{eq:ds6fss}).
\end{proof}
Formula (\ref{eq:as24s}) shows that if $\p$ is such that $(v_{S,\bfp}(S)-v_{S,\bfp}(\varnothing))=0$ for some $S\in 2^N\setminus\{\varnothing\}$, the linear functional $\Phi_{\mathrm{B},\p}({\,}\cdot{\,},S)$ on the space $\mathcal{F}^N$ is a linear combination of the functionals $\Phi_{\mathrm{B},\p}({\,}\cdot{\,},T)$ for $T\subsetneq S$. Moreover, by definition we always have $\Phi_{\mathrm{B},\p}({\,}\cdot{\,},\varnothing)=0$.

Therefore replacing a pseudo-Boolean function $f$ with the values $\Phi_{\mathrm{B},\p}(f,S)$ ($S\subseteq N$) results in a loss of information which depends on $\p$. Assuming a total order on $2^N$, we may regard $\Phi_{\mathrm{B},\p}$ as the linear map $\Phi_{\mathrm{B},\p}\colon \mathcal{F}^N\to\R^{2^n}$ defined by
$$
f\mapsto (\Phi_{\mathrm{B},\p}(f,S): S\subseteq N).
$$
We can measure the degree of dependence among the functionals $\Phi_{\mathrm{B},\p}({\,}\cdot{\,},S)$ ($S\subseteq N$) by computing the rank $\mathrm{rk}(\Phi_{\mathrm{B},\p})$ of $\Phi_{\mathrm{B},\p}$. Similarly, the resulting loss of information corresponds to the kernel $\mathrm{ker}(\Phi_{\mathrm{B},\p})$ of $\Phi_{\mathrm{B},\p}$.

\begin{proposition}\label{prop:we-99rk}
We have
$$
\mathrm{ker}(\Phi_{\mathrm{B},\p})~=~\mathrm{span}\{v_{S,\p}:S\subseteq N~\mbox{and}~v_{S,\bfp}(S)=v_{S,\bfp}(\varnothing)\}
$$
and
$$
\mathrm{rk}(\Phi_{\mathrm{B},\p}) ~=~ 2^n-|\{S\subseteq N:v_{S,\bfp}(S)=v_{S,\bfp}(\varnothing)\}|.
$$
\end{proposition}

\begin{proof}
Combining (\ref{eq:asfkS2}) with (\ref{eq:sd67sdfdf89}), we obtain
$$
\Phi_{\mathrm{B},\p}(v_{T,\p},S) ~=~
\begin{cases}
0, & \mbox{if $T\nsubseteq S$},\\
v_{T,\p}(T)-v_{T,\p}(\varnothing), & \mbox{otherwise}.
\end{cases}
$$
Thus, if $v_{T,\p}(T)-v_{T,\p}(\varnothing)=0$, then $v_{T,\p}\in\mathrm{ker}(\Phi_{\mathrm{B},\p})$. For the converse inclusion, take $f\in\mathcal{F}^N$. By (\ref{eq:7sfd6sd}), we have
$$
f ~=~ \sum_{S\subseteq N} I_{\mathrm{B},\p}(f,S)\, \prod_{i\in S}\sqrt{p_i(1-p_i)}{\,}v_{S,\p}\, .
$$
If $f\in\mathrm{ker}(\Phi_{\mathrm{B},\p})$, then $I_{\mathrm{B},\bfp}(f,S)\,(v_{S,\bfp}(S)-v_{S,\bfp}(\varnothing))=0$ for every $S\subseteq N$ by (\ref{eq:as24s}). This provides the converse inclusion. The value of $\mathrm{rk}(\Phi_{\mathrm{B},\p})$ immediately follows.
\end{proof}
We observe that the condition $v_{S,\bfp}(S)=v_{S,\bfp}(\varnothing)$ also reads
\begin{equation}\label{eq:sdf45d}
\prod_{i\in S}(1-p_i)~=~ (-1)^{|S|}\,\prod_{i\in S}p_i\, .\footnote{Or equivalently, $\prod_{i\in S}(1-1/p_i)=1$.}
\end{equation}
Since we have $\p\in\left]0,1\right[^n$, this condition cannot be fulfilled when $|S|$ is odd. Therefore by Proposition~\ref{prop:we-99rk} the rank of $\Phi_{\mathrm{B},\p}$ ranges within the interval $[2^{n-1},2^n-1]$. This motivates the following definition.

\begin{definition}
A tuple $\p\in\left]0,1\right[^n$ is \emph{nondegenerate} if
for every  $S\in 2^N\setminus\{\varnothing\}$ we have $v_{S,\bfp}(S)\neq v_{S,\bfp}(\varnothing)$, i.e., if $\mathrm{rk}(\Phi_{\mathrm{B},\p})=2^{n}-1$.
Otherwise, it is said to be \emph{degenerate}. A tuple $\p$ is \emph{maximally degenerate} if $\mathrm{rk}(\Phi_{\mathrm{B},\p})=2^{n-1}$.
\end{definition}

\begin{proposition}\label{prop:fasd65}
 The set of nondegenerate tuples is an open dense subset in $]0,1[^n$. For $n\geqslant 3$ there is a unique maximally degenerate tuple, namely $\p=\boldsymbol{\frac{1}{2}}$.
\end{proposition}

\begin{proof}
For $S\neq\varnothing$, Eq.~(\ref{eq:sdf45d}) is a nontrivial polynomial equation on the components of the tuple $\p$. This proves the first statement. To see that the second statement holds we note that $\p$ is maximally degenerate if Eq.~(\ref{eq:sdf45d}) holds for every $S$ such that $|S|$ is even. In particular it must hold for $S=\{i,j\}$, so that $p_i+p_j=1$ for all $i,j\in N$. This implies $\p=\boldsymbol{\frac{1}{2}}$ whenever $n\geqslant 3$. Finally, we can easily check that for this tuple we have $\mathrm{rk}(\Phi_{\mathrm{B},\p})=2^{n-1}$.
\end{proof}

In the following two subsections we further analyze both the maximally degenerate and nondegenerate cases.

\subsection{Behavior of the non-weighted Banzhaf influence indexes}

By Proposition~\ref{prop:fasd65} the non-weighted Banzhaf influence index $\Phi_{\mathrm{B}}$ is maximally degenerate. Let us now interpret its kernel.

\begin{definition}
Let $\ast\colon\mathcal{F}^N\to\mathcal{F}^N$ be the operator that carries $f$ into $f^*$ defined by $f^*(S)=-f(N\setminus S)$. Set also $\Sy=\{f\in\mathcal{F}^N: f^*=f\}$ and $\A=\{f\in\mathcal{F}^N: f^*=-f\}$.
\end{definition}

The spaces $\Sy$ and $\A$ can be described in terms of the functions $v_S$ as follows.

\begin{proposition}\label{prop:65f7}
We have $\mathrm{ker}(\Phi_{\mathrm{B}})=\A=\mathrm{span}\{v_S:|S|~\mbox{even}\}$ and
$\Sy=\mathrm{span}\{v_S:|S|~\mbox{odd}\}$. The space $\mathcal{F}^N$ is the direct sum of the orthogonal
subspaces $\Sy$ and $\A$. For every $S\subseteq N$, we have $g_S\in\Sy$. Finally, $\{g_S:|S|~\mbox{odd}\}$ is a basis of $\Sy$.
\end{proposition}

\begin{proof}
On the one hand, by Proposition~\ref{prop:we-99rk}, we have $\mathrm{ker}(\Phi_{\mathrm{B}})=\mathrm{span}\{v_S:|S|~\mbox{even}\}$.
On the other hand, we clearly have $v_S^*=(-1)^{|S|+1}{\,}v_S$ for every $S\subseteq N$. Therefore we have
\begin{equation}\label{eq:span}
\mathrm{span}\{v_S:|S|~\mbox{even}\}\subseteq \A\quad\mbox{and}
\quad\mathrm{span}\{v_S:|S|~\mbox{odd}\}\subseteq \Sy.
\end{equation}
It follows that $\mathrm{dim}(\A)\geqslant 2^{n-1}$ and $\mathrm{dim}(\Sy)\geqslant 2^{n-1}$. But since we have $\A\cap\Sy=\{0\}$, we must have
$\mathrm{dim}(\A)=\mathrm{dim}(\Sy)= 2^{n-1}$ and this proves the converse inclusions in (\ref{eq:span}).
This description of $\A$ and $\Sy$ proves the second assertion. The last assertions follow easily from Lemma~\ref{lemma:d7f6}.
\end{proof}

Combining Proposition~\ref{prop:pscalIm} and Eq.~(\ref{scalprodinter}) with Proposition \ref{prop:65f7} shows that the linear functionals
$\Phi_{\mathrm{B}}({\,}\cdot{\,},S)$ with $S\subseteq N$ and $I_{\mathrm{B}}({\,}\cdot{\,},S)$ with $|S|$ odd are combinations of the functionals
$\Phi_{\mathrm{B}}({\,}\cdot{\,},T)$ with $T\subseteq N$ and $|T|$ odd. These relations are given explicitly in the next proposition.

Let $E_n(x)$ denote the $n$th Euler polynomial and $E_n=2^nE_n(\frac{1}{2})$ the $n$th Euler number.

\begin{proposition}\label{prop:sd8f7}
For every $f\colon\{0,1\}^n\to\R$ and every $S\subseteq N$, we have
\begin{equation}\label{oddimportance}
\Phi_{\mathrm{B}}(f,S)=-\sum_{\textstyle{T\subseteq S\atop |T|\,\mathrm{odd}}}E_{|S|-|T|}(0)\, 2^{|S|-|T|}\, \Phi_{\mathrm{B}}(f,T)\, ,\qquad \mbox{if $|S|$ is even}\, ,
\end{equation}
and
\begin{equation}\label{evenimportance}
I_{\mathrm{B}}(f,S)=2^{|S|-1}\sum_{\textstyle{T\subseteq S\atop |T|\,\mathrm{odd}}}E_{|S|-|T|}\,\Phi_{\mathrm{B}}(f,T)\, ,\qquad \mbox{if $|S|$ is odd}\, .
\end{equation}
\end{proposition}

\begin{proof}
By Proposition~\ref{prop:pscalIm} we can prove (\ref{oddimportance}) by showing that
\begin{equation}\label{oddimportance34}
g_S ~=~ -\sum_{\textstyle{T\subseteq S\atop |T|\,\mathrm{odd}}}E_{|S|-|T|}(0)\, 2^{|S|-|T|}\, g_T\, ,\qquad \mbox{if $|S|$ is even}\, ,
\end{equation}
or equivalently (using the basic properties of Euler polynomials),
\begin{equation}\label{eq:sdfd86}
\sum_{T\subseteq S}E_{|S|-|T|}(0)\, 2^{-|T|}{\,}g_T ~=~ 0\, .
\end{equation}

To see that (\ref{eq:sdfd86}) holds, we show that
$$
\sum_{T\subseteq S}E_{|S|-|T|}(0)\, 2^{-|T|}{\,}\langle g_T,v_K\rangle ~=~ 0{\,},\qquad K\subseteq N.
$$
If $|K|$ is even, then $\langle g_T,v_K\rangle = 0$ since $g_T\in\Sy$ and $v_K\in\A$ by Proposition~\ref{prop:65f7}. If $|K|$ is odd, then by Lemma~\ref{lemma:d7f6} we have $\langle g_T,v_K\rangle = 2$ if $K\subseteq T$, and $0$, otherwise. Thus, it remains to show that
$$
\sum_{T:K\subseteq T\subseteq S}E_{|S|-|T|}(0)\, 2^{1-|T|} ~=~ 0,\qquad \mbox{for odd $|K|$}.
$$
Using the classical translation formula for Euler polynomials, we can rewrite this sum as
$$
2^{1-|K|}{\,}\sum_{t=0}^{|S|-|K|}{|S|-|K|\choose t}{\,}\Big(\frac{1}{2}\Big)^{t}{\,} E_{|S|-|K|-t}(0) ~=~ 2^{1-|K|}{\,}E_{|S|-|K|}\Big(\frac{1}{2}\Big)
$$
and the latter expression is zero since $|S|-|K|$ is odd. This completes the proof of (\ref{oddimportance}). Eq.~(\ref{evenimportance}) can be proved similarly.
\end{proof}

According to the results above, the influences $\Phi_{\mathrm{B}}(f,S)$ $(S\subseteq N)$ of a function $f\in\mathcal{F}^N$ determine only the orthogonal
projection of $f$ onto $\Sy$. On the other hand, due to Eq.~(\ref{oddimportance34}), not all vectors in $\R^{2^{n-1}}$ are influences
of a function in $\mathcal{F}^N$ : the best we can do is to build a unique function in $\Sy$ with prescribed ``odd'' influences. This is done in the following result.

\begin{proposition}\label{constructself}
For every set $\{i_T\in\R:\mbox{$|T|$ odd}\}$, the unique function $f_{\Sy}\in\Sy$ such that $\Phi_{\mathrm{B}}(f_{\Sy},T)=i_T$ for every $T\subseteq N$, $|T|$ odd, is given by
$$
f_{\Sy} ~=~ \frac{1}{2}\,\sum_{\textstyle{S\subseteq N\atop |S|\,\mathrm{odd}}}\Bigg(\sum_{\textstyle{T\subseteq S\atop |T|\,\mathrm{odd}}}E_{|S|-|T|}\, i_T\Bigg)\, v_S\, .
$$
\end{proposition}

\begin{proof}
By Proposition~\ref{prop:pscalIm}, the conditions required on $f_{\Sy}\in\Sy$ reduce to the equalities
$\langle f_{\Sy},g_T\rangle = i_T$ for odd $|T|$. Proposition~\ref{prop:65f7} then ensures existence
and uniqueness of $f_{\Sy}$. Since the set $\{v_S:|S|~\mbox{odd}\}$ is an orthonormal basis for $\Sy$ we can write
\[
f_{\Sy}~=~\sum_{\textstyle{S\subseteq N\atop |S|\,\mathrm{odd}}}\langle f_{\Sy}, v_S\rangle\, v_S\, .
\]
For odd $|S|$, by (\ref{scalprodinter}) we have $\langle f_{\Sy}, v_S\rangle=2^{-|S|}\,I_{\mathrm{B}}(f_{\Sy},S)$ and then we compute $I_{\mathrm{B}}(f_{\Sy},S)$ by using (\ref{evenimportance}).
\end{proof}

\subsection{Behavior of the weighted Banzhaf influence indexes}

The properties of the weighted influence index $\Phi_{\mathrm{B},\mathbf{p}}$ for a nondegenerate $\p$ are completely different from those of the non-weighted influence index $\Phi_{\mathrm{B}}$. By Proposition~\ref{prop:we-99rk}, for a nondegenerate $\p$ the kernel of $\Phi_{\mathrm{B},\p}$ is one-dimensional and
reduced to the constant functions. Moreover, the functionals $\Phi_{\mathrm{B},\p}({\,}\cdot{\,},S)$ for
$S\neq\varnothing$ are linearly independent. Therefore, we can build a function $f$ from its influences
$\Phi_{\mathrm{B},\p}(f,S)$ for $S\neq\varnothing$, up to an additive constant. Requiring a prescribed value of $f$ on the empty set,
or a prescribed interaction $I_{\mathrm{B},\p}(f,\varnothing)$, allows us to build a unique function. This is the aim of the next result.

\begin{proposition}\label{prop:ew7q}
Assume that $\p\in\left]0,1\right[^n$ is nondegenerate and consider a set function $i\colon 2^{N}\to\R$. There exists a unique function
$f\in\mathcal{F}^N$ such that $\Phi_{\mathrm{B},\bfp}(f,S)=i(S)$ for every nonempty $S\subseteq N$
and $I_{\mathrm{B},\p}(f,\varnothing)=i(\varnothing)$. It is given by
\[
f ~=~ i(\varnothing)+\sum_{S\neq\varnothing}\frac{v_{S,\bfp}}{v_{S,\bfp}(S)-v_{S,\bfp}(\varnothing)}\,\sum_{T\subseteq S}(-1)^{|S|-|T|}\, i(T)\, .
\]
There exists a unique set function $g\in\mathcal{F}^N$ such that $\Phi_{\mathrm{B},\bfp}(g,S)=i(S)$ for every nonempty $S\subseteq N$
and $g(\varnothing)=i(\varnothing)$. It is given by
\[
g ~=~ i(\varnothing)+\sum_{S\neq\varnothing}\frac{v_{S,\bfp}-v_{S,\bfp}(\varnothing)}{v_{S,\bfp}(S)-v_{S,\bfp}(\varnothing)}\,\sum_{T\subseteq S}(-1)^{|S|-|T|}\, i(T)\, .
\]
\end{proposition}
\begin{proof}
We compute $f$ by substituting (\ref{eq:as24s}) in (\ref{eq:7sfd6sd}). Then we have immediately $g=f-f(\varnothing)+i(\varnothing)$.
\end{proof}

\section{Application and final remarks}\label{sec:cr5}

We now end our investigation with an application of the concept of weighted Banzhaf influence index in reliability engineering. We also give a justification for our independence assumption, introduce a normalized influence index, and derive tight upper bounds on influences.

\subsection{An application in system reliability theory}

Consider a system made up of $n$ interconnected components. Let $C=\{1,\ldots,n\}$ be the set of components and let $\phi\colon\{0,1\}^n\to\{0,1\}$ be the \emph{structure function} which expresses the state of the system in terms of the states of its components. We assume that the system is \emph{semicoherent}, i.e., the structure function $\phi$ is nondecreasing in each variable and satisfies the conditions $\phi(0,\ldots,0)=0$ and $\phi(1,\ldots,1)=1$. We also assume that, at any time, the component states $X_1,\ldots,X_n$ are statistically independent. The \emph{reliability} of every  component $i\in C$ is then defined as the probability $p_i=\Pr(X_i=1)$. For general background on system reliability theory, see, e.g., Barlow and Proschan~\cite{BarPro81}.

According to the definition given by Ben-Or and Linial \cite{BenLin90} (as recalled in the introduction), for every subset $S$ of components, the index
$$
I_{\phi}(S) ~=~ \Phi_{\mathrm{B}}(\phi,S) ~=~ \frac{1}{2^{n-|S|}}{\,}\sum_{T\subseteq C\setminus S}\big(\phi(T\cup S)-\phi(T)\big)
$$
measures, at a given time, the probability that the state of the system is undetermined once the state of each component $i$ not in $S$ is set to one or zero with probability $p_i=1/2$.

In practice, however, the probabilities $\Pr(X_i=1)$ and $\Pr(X_i=0)$ need not be equal. The weighted version $\Phi_{\mathrm{B},\p}$ of the Banzhaf influence index then provides a straightforward generalization of Ben-Or and Linial's definition to the general case of arbitrary reliabilities $p_1,\ldots,p_n$. More specifically, the weighted index
\[
\Phi_{\mathrm{B},\p}(\phi,S) ~=~ \sum_{T\subseteq C\setminus S}p_T^S\,\big(\phi(T\cup S)-\phi(T)\big)\, ,
\]
where
$$
p_T^S ~=~ \prod_{i\in T}p_i\,\prod_{i\in C\setminus (S\cup T)}(1-p_i),
$$
(as described in Proposition~\ref{prop:756}) precisely measures, at a given time, the probability that the state of the system remains undetermined once the state of each component $i$ not in $S$ is set to one with probability $p_i$ and to zero with probability $1-p_i$. In a sense this probability measures, at a given time, the influence of the subset of components in $S$ over the system. When $S$ reduces to a singleton $\{i\}$ and $p_i=1/2$, we retrieve the classical Banzhaf power index, also known in reliability theory as the Birnbaum structural measure of component importance.


\subsection{On the independence assumption}

We have made the important assumption that the variables are set independently of each other. From this assumption we derived condition (\ref{eq:indep}). Let us now show that this assumption is rather natural.

For every probability distribution $w$ such that $p_i=\sum_{S\ni i}w(S)\in\left]0,1\right[$, the best $\{i\}$-approximation of $f\colon\{0,1\}^n\to\R$ with respect to the squared distance (\ref{eq:WeiDist}) associated with $w$ is given by
\[
f_{\{i\}} ~=~ \langle f, v_{\{i\},\p}\rangle\, v_{\{i\},\p}+ \langle f, 1\rangle\, ,
\]
where $v_{\{i\},\p}(\bfx)=(x_i-p_i)/\sqrt{p_i(1-p_i)}$.\footnote{Indeed, the functions $1$ and $v_{\{i\},\p}$ form an orthonormal basis for $V_{\{i\}}$.} Therefore, we can define the power/influence index associated with $w$ by
$$
I_w(f,\{i\}) ~=~ \frac{\langle f, v_{\{i\},\p}\rangle}{\sqrt{p_i(1-p_i)}} ~=~ \sum_{T\subseteq N\setminus\{i\}}\bigg(\frac{w(T\cup\{i\})}{p_i}\, f(T\cup\{i\})-\frac{w(T)}{1-p_i}\, f(T)\bigg)\, .
$$
However, we know from the literature on cooperative game theory (see, e.g., \cite{DubNeyWeb81,Web88}) that ``good'' power indexes should be of the form
\begin{equation}\label{eq:goodform}
I(f,\{i\}) ~=~ \sum_{T\subseteq N\setminus \{i\}}c_T^i\, \Delta_{\{i\}}f(T)\, ,\qquad c_T^i\in\R\, .
\end{equation}
It follows that the index $I_w({\,\cdot\,},\{i\})$ is of the form (\ref{eq:goodform}) if and only if $\frac{w(T\cup\{i\})}{p_i}=\frac{w(T)}{1-p_i}$ for every $T\subseteq N\setminus\{i\}$. Thus, we have proved the following result.

\begin{proposition}
The index $I_w({\,\cdot\,},\{i\})$ is of the form (\ref{eq:goodform}) for every $i\in N$ if and only if (\ref{eq:indep}) holds.
\end{proposition}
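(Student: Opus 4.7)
The plan is to use the explicit formula for $I_w(f,\{i\})$ displayed just above the proposition and turn the question into a combinatorial system of linear relations on $w$. Writing
$$
I_w(f,\{i\}) ~=~ \sum_{T\subseteq N\setminus\{i\}}\!\bigg(\frac{w(T\cup\{i\})}{p_i}\,f(T\cup\{i\})-\frac{w(T)}{1-p_i}\,f(T)\bigg),
$$
and comparing with the target form $I(f,\{i\})=\sum_{T\subseteq N\setminus\{i\}}c_T^i\,\Delta_{\{i\}}f(T)=\sum_T c_T^i(f(T\cup\{i\})-f(T))$, the values $f(T)$ for different $T\subseteq N$ can be chosen independently, so matching coefficients in the basis $\{f(T)\}_T$ shows that $I_w(\cdot,\{i\})$ has the form (\ref{eq:goodform}) if and only if
$$
\frac{w(T\cup\{i\})}{p_i} ~=~ \frac{w(T)}{1-p_i}\qquad\forall\, T\subseteq N\setminus\{i\}.
$$
This equivalence is essentially the sentence the authors already state just before the proposition, so the real content is reducing the system of these relations to (\ref{eq:indep}).

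For the easy ``if'' direction, I would plug (\ref{eq:indep}) into both sides of the displayed identity and check that each side equals $\prod_{j\in T}p_j\prod_{j\in N\setminus(T\cup\{i\})}(1-p_j)$, so the relation holds trivially for every $i\in N$ and every $T\subseteq N\setminus\{i\}$.

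For the ``only if'' direction, assume the relation holds for every $i\in N$ and every $T\subseteq N\setminus\{i\}$. Rewriting it as $w(T\cup\{i\})=\frac{p_i}{1-p_i}w(T)$, I would iterate: starting from $\varnothing$ and adding the elements of $S=\{i_1,\ldots,i_k\}$ one at a time, I get by induction on $|S|$ that
$$
w(S) ~=~ w(\varnothing)\,\prod_{i\in S}\frac{p_i}{1-p_i}.
$$
(The order of insertion is irrelevant, which is a mild consistency check but follows because each step only multiplies by a factor depending on the element added.) Imposing the normalization $\sum_{S\subseteq N}w(S)=1$ and applying the binomial product formula (\ref{eq:BinProdForm56}) with $a_i=p_i/(1-p_i)$ and $b_i=1$ gives
$$
w(\varnothing)\prod_{i\in N}\frac{1}{1-p_i}=1,\qquad\text{hence}\qquad w(\varnothing)=\prod_{i\in N}(1-p_i).
$$
Substituting this back yields exactly (\ref{eq:indep}). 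Finally, one verifies a posteriori that the identity $p_i=\sum_{S\ni i}w(S)$ used in the definition of $p_i$ is consistent with the resulting product form, again via (\ref{eq:BinProdForm56}).

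The only subtle point, and what I expect to be the main (minor) obstacle, is justifying that the iterative step yields a well-defined $w(S)$ independent of the order in which elements of $S$ are added; this is immediate from the fact that the multiplicative factors $p_i/(1-p_i)$ commute, but it must be mentioned to make the induction on $|S|$ rigorous. Everything else is a routine algebraic manipulation built on (\ref{eq:BinProdForm56}) and the linear independence of the evaluation functionals $f\mapsto f(T)$.
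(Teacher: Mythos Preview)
Your proposal is correct and follows the same route as the paper: the reduction to the coefficient condition $\frac{w(T\cup\{i\})}{p_i}=\frac{w(T)}{1-p_i}$ for all $T\subseteq N\setminus\{i\}$ is exactly the argument the authors give in the paragraph preceding the proposition, after which they simply write ``Thus, we have proved the following result'' and leave the equivalence between that system of relations and the product form (\ref{eq:indep}) to the reader. Your iterative computation of $w(S)$ from $w(\varnothing)$ and the normalization via (\ref{eq:BinProdForm56}) is the natural way to fill in that step, so you are not doing anything different---only making explicit what the paper takes for granted.
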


\subsection{Normalized index and upper bounds on influences}

Since the index $\Phi_{\mathrm{B},\mathbf{p}}$ is a linear map, it cannot be considered as an absolute
influence index but rather as a relative index constructed to assess and compare influences for a \emph{given} function.

If we want to compare influences for \emph{different} functions, we need to consider an absolute, normalized influence index. Such an index
can be defined as follows. Considering again $2^N$ as a probability space with respect to the measure $w$, we see that, for every
$S\subseteq N$ the number $\Phi_{\mathrm{B},\mathbf{p}}(f,S)$ is the covariance $\mathrm{cov}(f,g_{S,\p})$ of the random variables $f$ and $g_{S,\p}$. In fact, denoting the expectation of $f$ by $E[f]=\bar{f}(\p)$ (see (\ref{eq:5szd4f})), we have
$$
\Phi_{\mathrm{B},\mathbf{p}}(f,S) ~=~ \langle f,g_{S,\p}\rangle ~=~ \langle f-E[f],g_{S,\p}-E[g_{S,\p}]\rangle ~=~ \mathrm{cov}(f,g_{S,\p})
$$
since $E[g_{S,\p}]=\bar{g}_{S,\p}(\p)=0$ and $\langle E[f],g_{S,\p}\rangle=\Phi_{\mathrm{B},\mathbf{p}}(E[f],S)=0$.

To define a normalized influence index, we naturally consider the Pearson correlation coefficient instead of the covariance.\footnote{This approach was also considered for the interaction index (see \cite[{\S}5]{MarMat11}).} First observe that, for every nonempty subset $S\subseteq N$, the standard deviation of $g_{S,\p}$ is given by
\begin{equation}\label{eq:86ffsd}
\sigma(g_{S,\p}) ~=~ \sqrt{\prod_{i\in S}\frac{1}{p_i}+\prod_{i\in S}\frac{1}{1-p_i}}~.
\end{equation}
In fact, since $g_{S,\p}\in V_S$, we have
$$
\sigma^2(g_{S,\p}) ~=~ \mathrm{cov}(g_{S,\p},g_{S,\p}) ~=~ \Phi_{\mathrm{B},\mathbf{p}}(g_{S,\p},S) ~=~  g_{S,\p}(S)-g_{S,\p}(\varnothing)\, ,
$$
which immediately leads to (\ref{eq:86ffsd}).

\begin{definition}
The \emph{normalized influence index} is the mapping
$$
r\colon \{f\colon\{0,1\}^n\to\R :\sigma(f)\neq 0\}\times (2^N\setminus\{\varnothing\})\to\R
$$
defined by
$$
r(f,S) ~=~ \frac{\mathrm{cov}(f,g_{S,\p})}{\sigma(f)\,\sigma(g_{S,\p})} ~=~ \frac{\Phi_{\mathrm{B},\mathbf{p}}(f,S)}{\sigma(f)\,\sigma(g_{S,\p})}\, .
$$
\end{definition}

By definition the normalized influence index remains unchanged under interval scale transformations, that is, $r(af+b,S)=r(f,S)$ for all $a>0$ and $b\in\R$. Thus, it does not depend on the ``size'' of $f$ and therefore can be used to compare different functions in terms of influence.

Moreover, as a correlation coefficient, the normalized influence index satisfies the inequality $|r(f,S)|\leqslant 1$, that is,
$$
\frac{|\Phi_{\mathrm{B},\mathbf{p}}(f,S)|}{\sigma(f)} ~\leqslant ~\sigma(g_{S,\p})~.
$$
The equality holds if and only if there exist $a,b\in\R$ such that $f=a\,g_{S,\p}+b$.

Interestingly, this property shows that (\ref{eq:86ffsd}) is a tight upper bound on the influence of a normalized function $f/\sigma(f)$. Thus, for every nonempty subset $S\subseteq N$, those normalized functions for which $S$ has the greatest influence are of the form $f=(\pm{\,}g_{S,\p}+c)/\sigma(g_{S,\p})$, where $c\in\R$.

\section*{Acknowledgments}

The authors gratefully acknowledge partial support by the research project F1R-MTH-PUL-12RDO2 of the University of Luxembourg.

\appendix
\section{Proof of Lemma~\ref{lem:pq}}\label{sec:aa}

\begin{proof}[Proof of Lemma~\ref{lem:pq}]
Using the definition of the M\"obius transform in (\ref{gv}), we obtain
\begin{eqnarray*}
G(f,S) &=& \sum_{T\subseteq N\setminus S}p_T^S\,\bigg(\sum_{R\subseteq T\cup S}a(R)-\sum_{R\subseteq T}a(R)\bigg)\\
&=& \sum_{R\subseteq N}a(R)\,\bigg(\sum_{T:R\setminus S\subseteq T\subseteq N\setminus S} p_T^S-\sum_{T:R\subseteq T\subseteq N\setminus S} p_T^S\bigg)\, ,
\end{eqnarray*}
which shows that $G$ has the form (\ref{gv1}) with the prescribed $q_R^S$.

Conversely, substituting (\ref{eq:Mob}) into (\ref{gv1}) and assuming $S\neq\varnothing$, we obtain
$$
\sum_{\textstyle{R\subseteq N\atop R\cap S\neq\varnothing}} q_R^S\, a(R) ~=~ \sum_{\textstyle{R\subseteq N\atop R\cap S\neq\varnothing}} q_R^S\, \sum_{T\subseteq R}(-1)^{|R|-|T|}\, f(T) ~=~ \sum_{T\subseteq N}f(T)\,\sum_{\textstyle{R\supseteq T\atop R\cap S\neq\varnothing}}(-1)^{|R|-|T|}\, q_R^S\, .
$$
Partitioning every $R$ into $R'=R\setminus S$ and $R''=R\cap S$, the latter expression becomes
\[
\sum_{T\subseteq N}f(T)\,\sum_{T\setminus S\subseteq R'\subseteq N\setminus S}\,\sum_{\textstyle{T\cap S\subseteq R''\subseteq S\atop R''\neq\varnothing}}(-1)^{|R'|+|R''|-|T|}\, q_{R'\cup R''}^S\, .
\]
Since our assumption on the coefficients $q_R^S$ implies $q_{R'\cup R''}^S=q_{R'\cup S}^S$, the latter expression becomes
\[
\sum_{T\subseteq N}f(T)\,\sum_{T\setminus S\subseteq R'\subseteq N\setminus S}(-1)^{|R'|-|T\setminus S|}\, q_{R'\cup S}^S\,\sum_{\textstyle{T\cap S\subseteq R''\subseteq S\atop R''\neq\varnothing}}(-1)^{|R''|-|T\cap S|}\, ,
\]
where the inner sum equals $(1-1)^{|S\setminus T|}$, if $T\cap S\neq\varnothing$, and $-1$, otherwise. Setting $T'=T\setminus S$ for every $T$ containing $S$, the latter expression finally becomes
\[
\sum_{T'\subseteq N\setminus S}\bigg(\sum_{T'\subseteq R'\subseteq N\setminus S}(-1)^{|R'|-|T'|}\, q_{R'\cup S}^S\bigg)\,\big(f(T'\cup S)-f(T')\big)\, ,
\]
which completes the proof of the lemma.
\end{proof}


\end{document}